\documentclass[11pt]{article}
\usepackage[T1]{fontenc}
\usepackage{lmodern}
\usepackage[margin=1.05in]{geometry}
\usepackage{amsmath,amssymb,amsthm,mathtools}
\usepackage{mathrsfs}
\usepackage{microtype}
\usepackage[hidelinks]{hyperref}
\usepackage{enumitem}
\setlist{topsep=4pt,itemsep=2pt}
\numberwithin{equation}{section}
\newtheorem{theorem}{Theorem}[section]
\newtheorem{proposition}[theorem]{Proposition}
\newtheorem{lemma}[theorem]{Lemma}

\theoremstyle{remark}

\DeclareMathOperator{\supp}{supp}
\DeclareMathOperator{\dist}{dist}
\DeclareMathOperator{\diam}{diam}

\newcommand{\dd}{\,d}
\newcommand{\R}{\mathbb R}
\newcommand{\F}{\mathbb F_2}
\newcommand{\Neh}{\mathcal N}
\newcommand{\Prob}{\mathcal P}
\newcommand{\Um}{\mathscr U}
\newcommand{\ip}[2]{\langle #1,#2\rangle}
\title{Positive solutions of Yamabe-type equations with general potentials}
\author{%
Hongyu Chen\thanks{School of Mathematics, Sichuan University,
Chengdu 610064, China.
Email: \texttt{hongyu.chern@gmail.com}.}
\qquad
Li Ma\thanks{Corresponding author.
School of Mathematics and Physics,
University of Science and Technology Beijing,
Beijing 100083, China.
Email: \texttt{nuslma@126.com}.}
}
\date{September, 2026}
\begin{document}
\maketitle

\begin{abstract}
Let $(M^n,g)$ be a smooth closed connected Riemannian manifold of dimension $n\ge3$, and let $\psi\in C^\infty(M,\R)$. We prove that the equation
$$
-\frac{4(n-1)}{n-2}\Delta_g u+\psi u=u^{\frac{n+2}{n-2}}
$$
has a smooth positive solution if and only if its linear part has positive first eigenvalue. In dimensions $n\ge6$, the proof combines an energy estimate under exact local mass constraints with an intersection argument for finite cycles of atomic measures; dimensions $3\le n\le5$ follow from the theorem of Bahri and Brezis.

\medskip
\emph{2020 Mathematics Subject Classification.} Primary 35J61; Secondary 58E05.

\emph{Key words:} Yamabe type equation, critical Sobolev exponent, Bahri-Brezis question, bubbles.

\end{abstract}

\section{Introduction}\label{sec:introduction}

The famous question from the work of Barhi-Brezis \cite{BahriBrezis1996} is below. Given a smooth function $\psi$ on a smooth closed connected Riemannian manifold $(M^n,g)$ of dimension $n\ge3$. Define the operator
$L_\psi=-\frac{4(n-1)}{n-2}\Delta_g+\psi$, where we use the convention $\Delta_g=\operatorname{div}_g\nabla_g$, 
and its first eigenvalue
$$
\lambda_1(L_\psi)=\inf \{ \int_M uL_\psi (u)\dd V_g; \ u\in C^\infty(M,\R), \ \ \int_Mu^2\dd V_g=1\}.
$$
Bahri and Brezis asked that if there is a positive smooth solution to the elliptic problem
\begin{equation}\label{eq:main}
-\frac{4(n-1)}{n-2}\Delta_g u+\psi u=u^{\frac{n+2}{n-2}},\qquad u>0,
\end{equation}
under the spectral condition $\lambda_1(L_\psi)>0$. The difficulty for the study of the problem \eqref{eq:main} is the loss of compactness because the Sobolev embedding is not compact. However, this kind of problem has Struwe's compactness and related stability theory, which are presented in detail in the famous book \cite{DHR04}. The heat flow method is considered in \cite{Ma2023}.

For dimensions $3\le n\le5$, the affirmative answer follows from the theorem of Bahri and Brezis \cite{BahriBrezis1996}. We show in this work that for dimension bigger than $5$, the answer is still affirmative. Note that
The spectral condition $\lambda_1(L_\psi)>0$ is necessary: testing \eqref{eq:main} against a positive first eigenfunction $\varphi_1$ gives
$$
\lambda_1(L_\psi)\int_Mu\varphi_1\dd V_g
=\int_Mu^{\frac{n+2}{n-2}}\varphi_1\dd V_g>0.
$$
So the issue we address is the sufficiency of this spectral condition.

For $\psi=R_g$, where $R_g$ is the scalar curvature, \eqref{eq:main} is the positive Yamabe equation. Under the spectral condition, Aubin \cite{Aubin1976} and Schoen \cite{Schoen1984} established its solvability through comparison with the spherical Sobolev constant and the positive mass theorem; see Lee and Parker \cite{LeeParker1987}. An arbitrary potential preserves the critical nonlinearity but removes the scalar-curvature transformation law used in these arguments. For blow-up, localization and compactness theory for Yamabe-type equations, see the survey of Druet and Hebey \cite{DruetHebey2005}.

Under the spectral condition, minimization of the Yamabe type functional
$$
Y_\psi(v)=\frac{\displaystyle\int_M\bigl(\frac{4(n-1)}{n-2}|\nabla v|^2+\psi v^2\bigr)\dd V_g}
{\displaystyle\left(\int_M|v|^{2n/(n-2)}\dd V_g\right)^{(n-2)/n}},
\qquad v\in H^1(M)\setminus\{0\},
$$
produces a positive solution when the infimum is strictly below the spherical value. At the critical exponent, minimizing sequences can instead concentrate at Euclidean bubbles. In dimensions at least six, the leading correction to a bubble's energy is a positive multiple of $(\psi-R_g)(P)\delta^2$, where $P$ and $\delta$ are its center and scale. A negative value of $\psi-R_g$ therefore gives a test function below the first concentration level. When $\psi-R_g\ge0$, this second-order expansion does not provide a strict inequality. On the round sphere with $\psi=R_g+c$ and $c>0$, every nonzero function has quotient strictly above the spherical value, while concentrating bubbles approach it. The infimum is not attained, although the equation has a positive constant solution. The existence problem therefore requires a positive critical point which need not be a minimizer.

Critical-function methods address attainment of sharp Sobolev bounds under criticality and geometric hypotheses \cite{HebeyVaugon2001,Collion2007}; existence results based on improved invariant Sobolev embeddings impose symmetry and coefficient assumptions \cite{Djadli1999Symmetries,ClappFernandez2017}. Small linear perturbations of $R_g$ on manifolds of positive Yamabe type also admit blowing-up families under dimensional and geometric conditions \cite{EspositoPistoiaVetois2014}.

Applications of the barycenter method to critical boundary problems include the work of Mayer and Ndiaye \cite{MayerNdiaye2017} on the Cherrier--Escobar problem, which combines bubble interaction estimates with relative homology. For non-contractible bounded smooth domains in $\R^3$, Aldawood and Ndiaye \cite{AldawoodNdiaye2025} obtained positive solutions of the critical Dirichlet equation with a smooth linear potential under positivity and maximum-principle assumptions on the Dirichlet operator.

Ma \cite[Theorems 1 and 2 in the preprint version]{Ma2023} constructed an $L^{2n/(n-2)}$-preserving parabolic flow for the same potential. He proved global existence for smooth positive initial data and obtained a smooth positive stationary limit along a sequence of times when the Sobolev infimum is positive and strictly below the spherical value, with initial quotient at most that value.

Bahri and Brezis \cite[Theorem 1]{BahriBrezis1988} announced that every coercive operator with a bounded real potential on a closed manifold of dimension three, four or five admits a positive solution of the critical equation. Their complete proof appears in \cite[Theorem 2, p.~41]{BahriBrezis1996}. In arbitrary dimension they also proved existence when $H_1(M;\mathbb Z_2)$ or $H_2(M;\mathbb Z_2)$ is nonzero, and for $n=6$ or $7$ when $H_3(M;\mathbb Z_2)$ is nonzero; see \cite[Theorem 2]{BahriBrezis1988} and \cite[Theorem 3, p.~59]{BahriBrezis1996}. In \cite[Remarque 2, p.~573]{BahriBrezis1988}, they conjectured that the spectral condition alone suffices in every dimension $n\ge6$. The substitution $u=\left(\frac{4(n-1)}{n-2}\right)^{(n-2)/4}v$ changes \eqref{eq:main} into $-\Delta_gv+\frac{(n-2)\psi}{4(n-1)}v=v^{\frac{n+2}{n-2}}$, which is their normalization.

\begin{theorem}\label{thm:main}
Let $(M^n,g)$ be a smooth closed connected Riemannian manifold, $n\ge3$, and let $\psi\in C^\infty(M,\R)$. Then $\lambda_1(L_\psi)>0$ if and only if \eqref{eq:main} has a solution $u\in C^\infty(M)$ with $u>0$.
\end{theorem}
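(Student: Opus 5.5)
The necessity of $\lambda_1(L_\psi)>0$ was already checked in the introduction by testing \eqref{eq:main} against a positive first eigenfunction. So the whole task is sufficiency, and I would split by dimension. For $3\le n\le5$ I would invoke \cite[Theorem 2]{BahriBrezis1996} after the rescaling $u=\bigl(\frac{4(n-1)}{n-2}\bigr)^{(n-2)/4}v$; any positive weak solution is smooth by standard elliptic regularity and the strong maximum principle. For $n\ge6$ I would first dispose of the minimizing case. Set $S_n$ for the spherical value. If $\inf Y_\psi<S_n$, the Aubin-type argument gives a minimizer, which may be taken nonnegative (replace $v$ by $|v|$), hence positive and smooth. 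By the expansion recalled in the introduction, this case contains every $\psi$ with $\psi-R_g<0$ somewhere. So I would reduce to the case $\inf Y_\psi=S_n$ with $\psi\ge R_g$, argue by contradiction, and assume \eqref{eq:main} has no positive solution.

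Under this assumption I would run a Bahri--Coron type topological argument on the positive part of the Nehari manifold, $\Neh_+$. Since $\lambda_1(L_\psi)>0$, the functional is coercive and $\Neh_+$ is a good constraint. Struwe's global compactness (\cite{DHR04}) says that Palais--Smale sequences of positive functions decompose into sums of bubbles with energy levels $kS_n^{n/2}/n$, because there is no positive critical point. The key analytic step is an energy estimate under exact local mass constraints. Concretely, I would define a barycenter-type map $\beta$ from low sublevels of $\Neh_+$ to the space of probability measures on $M$, for instance $\beta(u)=u^{2n/(n-2)}\dd V_g/\int u^{2n/(n-2)}$ smoothed at scale. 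I would then show that below the level $2S_n^{n/2}/n+\varepsilon$, $\beta(u)$ lies near the space of measures with at most two atoms, using the constraint that each atom carries an exact fixed mass. I would also show that one-bubble functions $\delta\to0$ realize $M$ itself in the low sublevel.

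The topological step is an intersection argument in $\F$-homology. The fundamental class $[M]\in H_n(M;\F)$ is carried by one-bubble configurations. Deforming along the negative gradient flow, which is possible because compactness fails only at bubble levels, I would try to contract this class through two-atom and then finite-atomic configurations. Following Bahri--Coron, I would build finite cycles of atomic measures: cones over $M$ in the spaces $B_k(M)$ of measures with at most $k$ atoms. I would use the energy estimate to show that the interaction terms, which are positive of order $\delta^{n-2}$ and dominate the $(\psi-R_g)\delta^2$ correction only after careful tuning of the mass constraints, keep the flow lines of the $k$-atom cycle below level $(k+1)S_n^{n/2}/n$. The intersection number of the cycle with the $\beta$-preimage of the strata $B_k\setminus B_{k-1}$ is nonzero mod 2, because the relevant relative homology $H_*(B_k,B_{k-1};\F)$ is nontrivial in the right degree. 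That contradicts the deformation, so a positive critical point exists.

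I expect the main obstacle to be that energy estimate for $n\ge6$. It has to hold uniformly in the number of atoms, and it must control the sign-indefinite correction $(\psi-R_g)\delta^2$ against the bubble interactions so that the cones over $k$-atom configurations stay strictly below the $(k+1)$-bubble level. My first attempt would be to impose exact local mass constraints and minimize the energy locally within each Voronoi cell of the atoms. The positivity $\psi\ge R_g$ then makes the $\delta^2$ term an obstruction to concentration rather than a help, and that is what should push the levels down.
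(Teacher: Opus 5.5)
Your reductions are sound: necessity, Bahri--Brezis for $3\le n\le5$, and the minimizing case $\inf Y_\psi<S_n$, which covers $\psi-R_g<0$ somewhere. You should also split off $\psi\equiv R_g$ and cite the Yamabe theorem, since the remaining argument needs $\psi-R_g>0$ somewhere. The gap is in the case $\psi-R_g\ge0$, $\psi-R_g\not\equiv0$, where both halves of the topological argument are missing or inverted.

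First, the energy-lowering step. To reach a contradiction you must deform a detected cycle strictly \emph{below} the $K$-bubble level $Ke_n$ while staying below $(K+1)e_n$. Keeping cones below $(k+1)e_n$ is not enough. You propose to do this with interactions of order $\delta^{n-2}$ beating the $(\psi-R_g)\delta^2$ term ``after careful tuning of the mass constraints.'' For centers at fixed separation and $n\ge6$ this is impossible, whatever the weights: $\delta^{n-2}\ll\delta^2$, and for unequal scales $(\delta_i\delta_j)^{(n-2)/2}\ll\delta_i^2+\delta_j^2$. Moreover, positivity of $\psi-R_g$ raises the energy rather than lowering it, contrary to your final sentence. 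The missing idea is to take $K$ large and use only configurations whose supports lie on closed curves of uniformly bounded length $L_0$, namely the common fibers of a piecewise linear map $f:M\to\R^{n-1}$. Then at least $K/4$ pairs of centers lie within distance $4L_0/K$. At the scale $\delta_K=M_0K^{-n/(n-2)}$ the overlap gain $cK^{n+1}\delta_K^n$ beats the curvature cost $CK\delta_K^2$. This gives $J(G_{\delta_K}(\nu))\le Ke_n-c_*K^{-(n+2)/(n-2)}$ on the whole family (Proposition~\ref{prop:cylinder}).

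Second, the detection step is not an argument as stated. Nontriviality of $H_*(B_k,B_{k-1};\F)$ in some degree says nothing about whether your particular cycle has odd intersection with a stratum. In high dimensions Bahri and Brezis could run cone and join constructions of this kind only under extra hypotheses on $H_1$, $H_2$ or $H_3$. The paper replaces this with an explicit pair of complementary objects:
\begin{itemize}
\item the $(2K+n-2)$-dimensional cycle $z_K$ of fiberwise atomic measures;
\item a compact manifold $C_K$ of equal-weight binary-tree configurations placed inside $\{\psi-R_g>0\}$, whose mod-two Thom class pairs to one with $z_K$ (Propositions~\ref{prop:source} and~\ref{prop:tree}).
\end{itemize}
The exact-mass estimate then serves as a \emph{lower} bound. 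It is not the localization of sublevels below $2e_n+\varepsilon$ near $B_2(M)$; that is only the flow localization coming from quantization. The estimate says: if $\mu_u$ is near $C_K$, its selected centers lie in $C_K$, and all local masses equal $1/K$, then $J(u)>Ke_n$ (Proposition~\ref{prop:balanced}). This lets the Thom class be pulled back through a local center/weight selector to a relative class on $(\Neh^{Ke_n+\varepsilon},\Neh^{Ke_n-\varepsilon})$ that detects $(G_\delta)_*z_K$. The energy bound from the previous paragraph, together with the gap lemma, then contradicts that detection. Without both the bounded-length fiber cycle and this Thom-class detector, your outline does not close.
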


The high-dimensional assertion of Theorem~\ref{thm:main} is the smooth-potential version of the conjecture in \cite[Remarque 2]{BahriBrezis1988}, whose original formulation allows bounded potentials.

The proof combines the topological variational approach of Bahri and Coron \cite{BahriCoron1988} and Bahri and Brezis \cite{BahriBrezis1996} with Mazumdar's manifold bubble decomposition \cite{Mazumdar2016}, in the concentration and decomposition framework of Lions \cite{Lions1985} and Struwe \cite{Struwe1984}. Quantitative estimates for Euclidean multi-bubble decompositions were obtained by Figalli and Glaudo \cite{FigalliGlaudo2020} and Deng, Sun and Wei \cite{DengSunWei2025}. For two or more bubbles, the sharp control of the distance by the equation residual is linear in dimensions three to five, involves a logarithmic correction in dimension six, and a fractional power in higher dimensions. The essential case is $\psi-R_g\ge0$ with strict positivity somewhere. Near a fixed separated $K$-point configuration in this positive set, exact equality of the local critical masses forces the Nehari energy above the $K$-bubble level for arbitrary functions satisfying the constraints. A finite cycle of atomic measures supported in common fibers of a piecewise linear map to $\R^{n-1}$ has mod-two intersection one with a compact family of such configurations. A local center and weight selector transfers this pairing to a relative class of Nehari sublevels. Each fiber lies in a curve of uniformly bounded length, which allows the corresponding positive test cycle to be deformed below that level while remaining below the next bubble level. Under the no-solution assumption, integer energy quantization and a positive descending flow make these conclusions incompatible.

In Section~\ref{sec:variational} we establish regularity, the positive descending flow, energy quantization and finite-time localization. In Section~\ref{sec:local} we prove the estimate under exact local mass constraints. Section~\ref{sec:topology} constructs the finite cycles and computes their intersection pairing. Section~\ref{sec:tests} realizes these cycles by positive test functions and estimates their energy homotopy. In Section~\ref{sec:detector} we construct the relative energy class and prove Theorem~\ref{thm:main}.

\section{The variational space and a positive descending flow}\label{sec:variational}

For the remainder of the paper we assume $\lambda_1(L_\psi)>0$; dimension restrictions are stated in the relevant sections. In this section $n\ge3$. Define
\begin{align*}
Q(v,w)&=\int_M\bigl(\frac{4(n-1)}{n-2}\ip{\nabla v}{\nabla w}+\psi vw\bigr)\dd V_g,\\
J(v)&=\frac12Q(v,v)-\frac{n-2}{2n}\int_M(v_+)^{\frac{2n}{n-2}}\dd V_g.
\end{align*}
All function spaces and integrals use $g$. Inequalities and nonnegativity of Sobolev functions are understood almost everywhere. A full Palais--Smale sequence at level $c$ means a sequence with $J(u_j)\to c$ and $J'(u_j)\to0$ in $H^{-1}(M)$, with derivatives tested in the entire space $H^1(M)$. The inequalities
$$
Q(v,v)\ge\lambda_1(L_\psi)\|v\|_2^2,
\qquad
\frac{4(n-1)}{n-2}\|\nabla v\|_2^2\le Q(v,v)+\|\psi_-\|_\infty\|v\|_2^2
$$
show that $Q$ is an inner product equivalent to the $H^1$ inner product. We write $\|v\|_Q^2=Q(v,v)$.

For $v\in H^1(M)$ nonzero and nonnegative, let
\begin{equation}\label{eq:projection}
\Pi(v)=\left(\frac{Q(v,v)}{\int_Mv^{\frac{2n}{n-2}}\dd V_g}\right)^{(n-2)/4}v.
\end{equation}
This is the unique point on the ray through $v$ in
$$
\Neh=\left\{u\in H^1(M):u\ge0,\ u\ne0,\ Q(u,u)=\int_Mu^{\frac{2n}{n-2}}\dd V_g\right\}.
$$
The map $\Pi$ and normalization by the $Q$ norm give inverse homeomorphisms between $\Neh$ and $\Sigma=\{z\in H^1(M):z\ge0,\ \|z\|_Q=1\}$. Direct maximization along a ray gives
\begin{equation}\label{eq:ray-energy}
J(\Pi(v))=\max_{t>0}J(tv)
=\frac{Q(v,v)^{n/2}}{n\left(\int_Mv^{\frac{2n}{n-2}}\dd V_g\right)^{(n-2)/2}}.
\end{equation}
We use strict sublevels $\Neh^b=\{u\in\Neh:J(u)<b\}$. Sobolev embedding gives a positive lower bound for $J$ on $\Neh$.

The Euclidean solution and its energy are
\begin{align*}
U(x)&=[4n(n-1)]^{(n-2)/4}(1+|x|^2)^{-(n-2)/2},\\
e_n&=\frac1n\int_{\R^n}U^{\frac{2n}{n-2}}\dd x
=\frac{[4n(n-1)]^{n/2}\pi^{n/2}\Gamma(n/2)}{n\Gamma(n)}.
\end{align*}
Radial differentiation gives $-\frac{4(n-1)}{n-2}\Delta U=U^{\frac{n+2}{n-2}}$. The integral follows by setting $s=r^2$ in the radial beta integral. In particular, $e_6=4800\pi^3$.

\begin{lemma}\label{lem:regularity}
Every nonzero nonnegative weak $H^1$ solution of $L_\psi u=u^{\frac{n+2}{n-2}}$ is smooth and strictly positive.
\end{lemma}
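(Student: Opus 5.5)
The plan is the standard two-stage argument: first get $L^\infty$ (indeed $L^p$ for every $p$) bounds by Brezis--Kato/Trudinger--Moser iteration, then bootstrap with Schauder theory, and finally use the strong maximum principle for positivity. Write the equation as
$$
-\frac{4(n-1)}{n-2}\Delta_g u+\psi u=a(x)\,u,\qquad a=u^{\frac{4}{n-2}}\in L^{n/2}(M),
$$
since $u\in H^1\subset L^{2n/(n-2)}$. The key point is that $a$ lies in $L^{n/2}$, the borderline space, so its $L^{n/2}$ norm on small sets is small. This smallness is what lets the iteration start.

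\textbf{Step 1: integrability.} Test the weak equation against $\eta^2 u\min(u,T)^{2(\beta-1)}$ with a cutoff $\eta$ and truncation level $T$; such test functions lie in $H^1$. This gives
$$
\int|\nabla(\eta u\, u_T^{\beta-1})|^2\lesssim \beta\int a\,\eta^2u^2u_T^{2\beta-2}+C_\beta\int u^2u_T^{2\beta-2}.
$$
Split $a=a\mathbf 1_{\{a>K\}}+a\mathbf 1_{\{a\le K\}}$. For $K$ large, $\|a\mathbf 1_{\{a>K\}}\|_{n/2}$ is small. By H\"older and Sobolev, that piece is then absorbed into the left-hand side, while the other piece is bounded by $K\|u\|^2_{2\beta}$-type terms. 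Letting $T\to\infty$ shows $u\in L^{p}$ for all $p<\infty$ (Brezis--Kato). I expect this to be the main obstacle, since it is the only place where criticality enters. It needs the absorption constant tracked uniformly for each fixed $\beta$, but the argument is standard and the compactness of $M$ removes boundary issues.

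\textbf{Step 2: regularity bootstrap.} Once $u\in L^p$ for all $p$, the right-hand side $u^{\frac{n+2}{n-2}}-\psi u$ lies in $L^p$ for all $p$. $L^p$ elliptic theory for $-\Delta_g$ on the closed manifold gives $u\in W^{2,p}$, hence $u\in C^{1,\alpha}$. The nonlinearity $t\mapsto t_+^{(n+2)/(n-2)}$ is $C^{1}$, and $C^\infty$ on $(0,\infty)$. So Schauder estimates give $u\in C^{2,\alpha}$ first. Smoothness everywhere then follows once we know $u>0$, because the nonlinearity is smooth near positive values and we can iterate Schauder to $C^{k,\alpha}$ for all $k$.

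\textbf{Step 3: positivity.} Since $u\ge0$ is $C^2$ and satisfies
$$
-\frac{4(n-1)}{n-2}\Delta_g u+\psi_+ u=\psi_- u+u^{\frac{n+2}{n-2}}\ge0,
$$
with $\psi_+\ge0$ bounded, the strong maximum principle (Hopf) applies to the operator $-\frac{4(n-1)}{n-2}\Delta_g+\psi_+$. Hence either $u\equiv0$ or $u>0$ on the connected manifold $M$. The first alternative is excluded because $u\ne0$. Positivity then closes the bootstrap in Step 2 and gives $u\in C^\infty(M)$.
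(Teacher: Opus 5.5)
Your proposal is correct and follows essentially the same route as the paper: Brezis--Kato truncation with the test function $u\min(u,T)^{2\beta-2}$ and a splitting of the $L^{n/2}$ coefficient into a bounded part and a small tail, giving all finite $L^p$ bounds; then $W^{2,p}$, $C^{1,\alpha}$, Schauder to $C^{2,\alpha}$, the strong maximum principle for an operator with nonnegative zeroth-order term, and a final bootstrap once $u$ is bounded below. The differences are cosmetic: your cutoff $\eta$ is unnecessary on a closed manifold, and you move $\psi_-u$ to the right and keep $\psi_+$ on the left, where the paper adds a constant $C$ with $\psi+C\ge0$.
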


\begin{proof}
We use the truncation argument of Brezis and Kato \cite[proof of Theorem~2.3]{BrezisKato1979}. The equation is $-\frac{4(n-1)}{n-2}\Delta_gu=(u^{\frac{4}{n-2}}-\psi)u$, and $u^{\frac{4}{n-2}}-\psi\in L^{n/2}(M)$. For $\beta\ge1$ and $T>0$, test the weak equation with $u\min(u,T)^{2\beta-2}$ and set $w_T=u\min(u,T)^{\beta-1}$. Truncation and approximation justify this test. On $\{u<T\}$ the ratio of the testing gradient term to $|\nabla w_T|^2$ is $(2\beta-1)/\beta^2$, and on $\{u\ge T\}$ it is one. Hence
$$
\frac{4(n-1)(2\beta-1)}{(n-2)\beta^2}\int_M|\nabla w_T|^2\dd V_g
\le\int_M(u^{\frac{4}{n-2}}-\psi)_+w_T^2\dd V_g.
$$
Split the coefficient on the right into a bounded function and a tail with arbitrarily small $L^{n/2}$ norm. Hölder's inequality and the Sobolev inequality absorb the tail, for this fixed $\beta$, and give $\|w_T\|_{\frac{2n}{n-2}}^2\le C_\beta\|w_T\|_2^2$. If $u\in L^{2\beta}$, monotone convergence therefore gives $u\in L^{\frac{2n\beta}{n-2}}$. Starting with $2\beta=\frac{2n}{n-2}$ multiplies the available integrability exponent by $\frac{n}{n-2}>1$ at each iteration. Thus $u$ belongs to every finite $L^r$ space, and so does $u^{\frac{n+2}{n-2}}-\psi u$. Elliptic estimates give $u\in W^{2,r}(M)$ for every finite $r$. Taking $r>n$ yields $u\in C^{1,\alpha}$ for some $\alpha>0$. The map $s\mapsto s^{\frac{n+2}{n-2}}$ is locally Lipschitz on $[0,\infty)$, so the right-hand side is H\"older continuous; Schauder estimates give $u\in C^{2,\alpha}$.

Choose $C\ge0$ with $\psi+C\ge0$. The inequality $(-\frac{4(n-1)}{n-2}\Delta_g+\psi+C)u=u^{\frac{n+2}{n-2}}+Cu\ge0$ and the strong maximum principle imply $u>0$. Its minimum on $M$ is positive. The power $u^{\frac{n+2}{n-2}}$ is consequently a smooth composition, so elliptic bootstrapping gives $u\in C^\infty(M)$.
\end{proof}

\begin{lemma}\label{lem:flow}
There is a continuous flow $\Phi_t:\Neh\to\Neh$, defined for every $t\ge0$, which decreases $J$. On $\Sigma$ it is induced by
$$
z'=V(z),\qquad
V(z)=L_\psi^{-1}(z^{\frac{n+2}{n-2}})-\left(\int_Mz^{\frac{2n}{n-2}}\dd V_g\right)z.
$$
On every fixed finite positive energy interval, a sequence with $\|V(z_j)\|_Q\to0$ gives, after passing to a subsequence, a bounded full Palais--Smale sequence $\Pi(z_j)$ for $J$ on $H^1(M)$.
\end{lemma}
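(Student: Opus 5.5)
The plan is to build the flow on $\Sigma$ as the solution flow of the ODE $z'=V(z)$ in the Hilbert space $(H^1,Q)$, show it preserves $\Sigma$ and decreases the energy, and then transport it to $\Neh$ by $\Pi$. The first task is regularity of $V$. Write $p=\frac{n+2}{n-2}$. For $z\ge0$ the Nemytskii map $z\mapsto z^p$ (extended as $(z_+)^p$ for general $z$) is locally Lipschitz from $L^{2n/(n-2)}$ into $L^{2n/(n+2)}\subset H^{-1}$. Since $L_\psi^{-1}:H^{-1}\to H^1$ is bounded by the spectral condition, $V$ is locally Lipschitz on $H^1$. Picard--Lindelöf then gives local solutions.

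Three structural facts about the vector field are needed.

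\emph{Tangency to the sphere.} We have $Q(V(z),z)=\int z^{p+1}-\bigl(\int z^{p+1}\bigr)\|z\|_Q^2=0$ on $\|z\|_Q=1$. Hence $\|z\|_Q$ is conserved. It is cleaner to use the homogeneous extension $\tilde V(z)=L_\psi^{-1}((z_+)^p)-\frac{\int (z_+)^{p+1}}{\|z\|_Q^2}z$, which satisfies $\frac{d}{dt}\|z\|_Q^2=0$ everywhere.

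\emph{Positivity.} Write the flow as $z'=A(z)-\mu(t)z$ with $A(z)=L_\psi^{-1}((z_+)^p)\ge0$; the sign of $A(z)$ follows from the maximum principle for the coercive operator $L_\psi$, or by testing with the negative part. Variation of constants gives $z(t)=e^{-\int_0^t\mu}z(0)+\int_0^t e^{-\int_s^t\mu}A(z(s))\,ds$. Thus $z(0)\ge0$ implies $z(t)\ge0$, and $z(t)\ne0$ because the norm is $1$. So $\Sigma$ is invariant.

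\emph{Global existence.} Since $\|z\|_Q=1$ is conserved and $V$ is bounded on bounded sets by Sobolev, solutions never blow up and exist for all $t\ge0$. Continuous dependence gives continuity of $\Phi_t$ on $\Sigma$.

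Monotonicity comes from the quantity $S(z)=\int z^{p+1}$, with $\frac{2n}{n-2}=p+1$. By \eqref{eq:ray-energy}, on $\Sigma$ we have $J(\Pi(z))=\frac1n S(z)^{-(n-2)/2}$, so it suffices to show $S$ increases. Compute
\[
\frac{d}{dt}S=(p+1)\int z^pV(z)=(p+1)\,Q\bigl(L_\psi^{-1}z^p,V(z)\bigr)=(p+1)\,Q\bigl(V(z)+S z,V(z)\bigr)=(p+1)\|V(z)\|_Q^2,
\]
using $Q(z,V)=0$. Hence $J\circ\Pi$ is nonincreasing along the flow, strictly unless $V(z)=0$. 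Define $\Phi_t=\Pi\circ\phi_t\circ\Pi^{-1}$ on $\Neh$. This is continuous because $\Pi$ and normalization are inverse homeomorphisms.

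For the Palais--Smale statement, take $z_j\in\Sigma$ with $J(\Pi z_j)$ in a fixed interval $[a,b]\subset(0,\infty)$. Then $S_j=S(z_j)$ lies in a compact subinterval of $(0,\infty)$, and $u_j=\Pi z_j=S_j^{-(n-2)/4}z_j$ is bounded in $H^1$. After passing to a subsequence, $J(u_j)$ converges. For $\varphi\in H^1$ we have
\[
J'(u_j)\varphi=Q(u_j,\varphi)-\int u_j^p\varphi
=S_j^{-(n-2)/4}\Bigl[Q(z_j,\varphi)-S_j^{-1}Q\bigl(L_\psi^{-1}z_j^p,\varphi\bigr)\Bigr],
\]
using $u_j^p=S_j^{-p(n-2)/4}z_j^p$ with $p(n-2)/4-(n-2)/4=1$. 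The bracket equals $-S_j^{-1}Q(V(z_j),\varphi)$, so $\|J'(u_j)\|_{H^{-1}}\le C\|V(z_j)\|_Q\to0$. The derivative is tested on the full space $H^1$, as required. Since $u_j\ge0$, $(u_j)_+=u_j$, so this derivative agrees with that of $J$.

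I expect the main obstacle to be the positivity and invariance step. It requires using the $(\cdot)_+$ extension so the field is Lipschitz on all of $H^1$ rather than only on the closed cone. The remaining steps are routine algebra once the identity $Q(L_\psi^{-1}z^p,\cdot)=\int z^p\,\cdot$ is used.
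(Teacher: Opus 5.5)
Your proposal is correct and follows essentially the same route as the paper. Both arguments use the positive-part, $Q$-homogeneous extension of the field, invariance of the $Q$-sphere from $Q(V(z),z)=0$, and nonnegativity via the variation-of-constants formula with forcing $L_\psi^{-1}((z_+)^{\frac{n+2}{n-2}})\ge0$; they then use the identity $\frac{d}{dt}\int_M z^{\frac{2n}{n-2}}\,dV_g=\frac{2n}{n-2}\|V(z)\|_Q^2$ and the gradient formula $J'(\Pi(z))=-\bigl(\int_M z^{\frac{2n}{n-2}}\,dV_g\bigr)^{-(n+2)/4}Q(V(z),\cdot)$, together with two-sided bounds on the critical mass on a finite energy interval.
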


\begin{proof}
Lax--Milgram defines $L_\psi^{-1}:H^{-1}\to H^1$. It preserves nonnegativity: if $L_\psi w=f\ge0$, testing with $w_-$ gives $-Q(w_-,w_-)=\ip{f}{w_-}\ge0$. Hölder's inequality and the mean-value formula give
$$
\|z_+^{\frac{n+2}{n-2}}-y_+^{\frac{n+2}{n-2}}\|_{\frac{2n}{n+2}}
\le(\frac{n+2}{n-2})(\|z\|_{\frac{2n}{n-2}}+\|y\|_{\frac{2n}{n-2}})^{\frac{4}{n-2}}\|z-y\|_{\frac{2n}{n-2}}.
$$
To construct the flow before establishing invariance of the nonnegative cone, use on a $Q$ annulus the field
$$
z\longmapsto L_\psi^{-1}((z_+)^{\frac{n+2}{n-2}})
-\frac{\int_M(z_+)^{\frac{2n}{n-2}}\dd V_g}{Q(z,z)}z.
$$
The preceding power estimate makes it locally Lipschitz. Its $Q$ inner product with $z$ vanishes, so each $Q$ sphere is invariant. Sobolev embedding bounds the field uniformly on the unit sphere. Every finite-time trajectory is therefore Lipschitz in $H^1$, has a limit at a finite endpoint, and extends by local existence. Variation of constants gives, for a trajectory starting in $\Sigma$,
\begin{align*}
z(t)={}&\exp\left(-\int_0^t\int_M(z(s)_+)^{\frac{2n}{n-2}}\dd V_g\dd s\right)z(0)\\
&+\int_0^t\exp\left(-\int_s^t\int_M(z(r)_+)^{\frac{2n}{n-2}}\dd V_g\dd r\right)
L_\psi^{-1}((z(s)_+)^{\frac{n+2}{n-2}})\dd s.
\end{align*}
All forcing terms are nonnegative by positivity of the inverse. Hence $z(t)\ge0$, the positive parts can be removed, and the field reduces to $V$ on $\Sigma$. Conjugating this flow by $\Pi$ defines $\Phi_t$ on $\Neh$.

Since $Q(L_\psi^{-1}(z^{\frac{n+2}{n-2}}),z)=\int_Mz^{\frac{2n}{n-2}}\dd V_g$ and $Q(z,V)=0$, differentiation gives
\begin{equation}\label{eq:flow-energy}
\begin{split}
\frac{d}{dt}\int_Mz^{\frac{2n}{n-2}}\dd V_g&=\frac{2n}{n-2}\|V(z)\|_Q^2,\\
\frac{d}{dt}J(\Pi(z))&=-\left(\int_Mz^{\frac{2n}{n-2}}\dd V_g\right)^{-n/2}\|V(z)\|_Q^2.
\end{split}
\end{equation}
The full ambient gradient satisfies
\begin{equation}\label{eq:full-gradient}
\nabla_QJ(\Pi(z))
=-\left(\int_Mz^{\frac{2n}{n-2}}\dd V_g\right)^{-(n+2)/4}V(z).
\end{equation}
On a finite positive energy interval, \eqref{eq:ray-energy} bounds $\int_Mz^{\frac{2n}{n-2}}\dd V_g$ above and below by positive constants. Formula~\eqref{eq:full-gradient} proves the last assertion after taking an energy-convergent subsequence.
\end{proof}

Let $\Prob(M)$ denote the probability measures on $M$, equipped with the Wasserstein distance $W_1$ for the metric $g$. Since $M$ is compact, this distance induces weak convergence and makes $\Prob(M)$ compact. For $u\ge0$, $u\ne0$, define
\begin{equation}\label{eq:mass-measure}
\mu_u=\frac{u^{\frac{2n}{n-2}}\dd V_g}{\int_Mu^{\frac{2n}{n-2}}\dd V_g}.
\end{equation}
For an integer $K\ge1$, let $B_K(M)\subset\Prob(M)$ consist of measures with at most $K$ support points. These spaces of formal barycenters and their topology are studied by Kallel and Karoui \cite{KallelKaroui2011}.

\begin{lemma}\label{lem:quantization}
Assume that \eqref{eq:main} has no smooth positive solution. If $(u_j)$ is a nonnegative bounded full Palais--Smale sequence for $J$ at a positive level, then, after a subsequence, there is an integer $k\ge1$ such that
$$
J(u_j)\longrightarrow ke_n,\qquad
\mu_{u_j}\longrightarrow\frac1k\sum_{i=1}^k\delta_{P_i}
$$
in $W_1$, for points $P_i\in M$ which need not be distinct.
\end{lemma}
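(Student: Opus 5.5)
The plan is to apply the Struwe--Mazumdar bubble decomposition \cite{Struwe1984,Mazumdar2016} to the bounded Palais--Smale sequence, and then to read off the energy and the limiting mass measure from the decomposition. Since $(u_j)$ is bounded in $H^1(M)$, after a subsequence $u_j\rightharpoonup u_\infty$ weakly in $H^1$, strongly in $L^2$, and almost everywhere. The Palais--Smale condition passes to the weak limit: testing $J'(u_j)$ against a fixed $\varphi\in H^1$ and using weak convergence of $(u_j)_+^{\frac{n+2}{n-2}}$ in $L^{\frac{2n}{n+2}}$ shows that $u_\infty$ is a nonnegative weak solution of $L_\psi u=u^{\frac{n+2}{n-2}}$. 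By Lemma~\ref{lem:regularity}, if $u_\infty\neq0$ it would be a smooth positive solution of \eqref{eq:main}. This contradicts the hypothesis, so $u_\infty=0$.

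The decomposition then gives an integer $k\ge0$, points $P_i^j\to P_i$, scales $\delta_i^j\to0$, and nonzero finite-energy solutions $w_i$ of $-\frac{4(n-1)}{n-2}\Delta w=|w|^{\frac{4}{n-2}}w$ on $\R^n$ such that
$$
u_j=\sum_{i=1}^k\sigma_i^j+o(1)\ \text{in }H^1,\qquad
J(u_j)\to\sum_i E(w_i).
$$
Here $\sigma_i^j$ is the rescaled $w_i$ transplanted through normal coordinates with a cutoff, and $E$ is the Euclidean energy. The $w_i$ are a priori sign-changing, so nonnegativity of $u_j$ must be used. The natural route is to run the Struwe extraction directly: at each step the rescaled functions $(\delta_i^j)^{(n-2)/2}u_j(\exp_{P_i^j}(\delta_i^j\,\cdot))$ are nonnegative, so their weak limit $w_i$ is nonnegative. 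A nonnegative nonzero finite-energy entire solution is positive by the maximum principle. By the Caffarelli--Gidas--Spruck classification it is a translate and dilate of $U$, hence $E(w_i)=e_n$. Since the level is positive, $k\ge1$, and therefore $J(u_j)\to ke_n$.

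For the measures, I would expand $\int_M u_j^{\frac{2n}{n-2}}$ using the $L^{\frac{2n}{n-2}}$ convergence of the remainder to zero (Sobolev embedding), together with the standard almost-orthogonality of the bubbles. The latter follows from the separation condition $\delta_i/\delta_l+\delta_l/\delta_i+\dist(P_i,P_l)^2/(\delta_i\delta_l)\to\infty$, which makes cross terms in $L^{\frac{2n}{n-2}}$ vanish. Each bubble carries critical mass $\int U^{\frac{2n}{n-2}}=ne_n$ concentrating at $P_i$, because $\delta_i^j\to0$ and $P_i^j\to P_i$. Hence
$$
u_j^{\frac{2n}{n-2}}\dd V_g\rightharpoonup ne_n\sum_i\delta_{P_i}
$$
weakly as measures, with total mass $kne_n$. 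Normalizing gives $\mu_{u_j}\to\frac1k\sum_i\delta_{P_i}$ weakly, which is equivalent to $W_1$ convergence on compact $M$.

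The main obstacle is justifying the nonnegativity of the bubble profiles and the orthogonality expansion of the critical mass. A convenient way to handle this is to use the elementary inequality
$$
\Bigl|\bigl|\textstyle\sum a_i\bigr|^p-\sum|a_i|^p\Bigr|\le C\sum_{i\ne l}|a_i|^{p-1}|a_l|,
$$
and to estimate each mixed integral via the separation condition after rescaling.
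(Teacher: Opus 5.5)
Your proposal is correct and follows the paper's route: apply the bubble decomposition, show the weak limit vanishes by Lemma~\ref{lem:regularity} and the no-solution hypothesis, classify the nonnegative (hence positive) profiles by Caffarelli--Gidas--Spruck so that each carries energy $e_n$, and then pass to the normalized critical densities. The differences are minor:
\begin{itemize}
\item For the measures, the paper avoids your interaction estimates. It combines $\int_Mu_j^{\frac{2n}{n-2}}\dd V_g\to kne_n$, obtained by testing $J'(u_j)$ against $u_j$, with the pointwise inequality $(\sum_iU_{i,j})^{\frac{2n}{n-2}}\ge\sum_iU_{i,j}^{\frac{2n}{n-2}}$; this gives $L^1$ convergence of the densities without using the separation condition.
\item The paper treats nonorientable $M$ separately through the orientation double cover, because it invokes Mazumdar's theorem in the orientable setting. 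Your direct Struwe extraction is purely local and does not need that step.
\end{itemize}
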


\begin{proof}
For an orientable manifold apply Mazumdar's decomposition \cite[Theorem 1.2 in the cited preprint version]{Mazumdar2016} with order one, no boundary and zeroth-order coefficient $\frac{(n-2)\psi}{4(n-1)}$. Its functional is
$$
I(v)=\frac12\int_M\left(|\nabla v|^2+\frac{(n-2)\psi}{4(n-1)}v^2\right)\dd V_g
-\frac{n-2}{2n}\int_M|v|^{\frac{2n}{n-2}}\dd V_g.
$$
The substitution $u=\left(\frac{4(n-1)}{n-2}\right)^{(n-2)/4}v$ gives $J(u)=\left(\frac{4(n-1)}{n-2}\right)^{n/2}I(v)$ on nonnegative functions and converts full Palais--Smale sequences between the two functionals. At a nonnegative function, the first derivatives of the positive-part functional $J$ and the corresponding absolute-value functional agree in every ambient direction. The cited theorem permits an arbitrary smooth real zeroth-order coefficient, and its condition on the order is $2<n$. Since the manifold has no boundary, only entire Euclidean profiles occur. For a nonnegative Palais--Smale sequence it gives a nonnegative weak solution, finitely many positive Euclidean bubbles, a remainder tending to zero in $H^1$, and additive energy splitting. By Lemma~\ref{lem:regularity} the weak solution must be zero under the present assumption. The classification of positive entire solutions by Caffarelli, Gidas and Spruck \cite{CaffarelliGidasSpruck1989} identifies every Euclidean profile with a translate and dilate of $U$ after the above normalization. Each bubble therefore has energy $e_n$ and critical mass $ne_n$. Thus
$$
u_j=\sum_{i=1}^k U_{i,j}+o_{H^1}(1),\qquad J(u_j)\to ke_n.
$$
Here $U_{i,j}$ are nonnegative cut-off Euclidean bubbles. Testing $J'(u_j)$ against $u_j$ gives $\int_Mu_j^{\frac{2n}{n-2}}\dd V_g\to kne_n$. The $H^1$ remainder is small in $L^{\frac{2n}{n-2}}$, so the same limit holds for $\int_M(\sum_iU_{i,j})^{\frac{2n}{n-2}}\dd V_g$. Since
$$
\left(\sum_iU_{i,j}\right)^{\frac{2n}{n-2}}-\sum_iU_{i,j}^{\frac{2n}{n-2}}\ge0
$$
and its integral tends to zero, this difference tends to zero in $L^1$. Testing against continuous functions proves the measure assertion, including coincident centers and unequal bubble scales.

For completeness, suppose that $M$ is nonorientable and let $\pi:\widetilde M\to M$ be its orientation double cover. If $\varphi_1>0$ is a downstairs first eigenfunction, integration by parts gives, for every $v\in H^1(\widetilde M)$,
$$
\widetilde Q(v,v)=\frac{4(n-1)}{n-2}\int_{\widetilde M}(\varphi_1\circ\pi)^2
\left|\nabla\frac{v}{\varphi_1\circ\pi}\right|^2\dd V_{\widetilde g}
+\lambda_1(L_\psi)\int_{\widetilde M}v^2\dd V_{\widetilde g}.
$$
This proves coercivity on the entire upstairs space. The lifted sequence is full Palais--Smale upstairs: the derivative vanishes on the odd part of any test function by deck invariance, and its even part descends. Its weak limit is invariant and would descend to a positive solution if nonzero. The upstairs quantum measure is deck invariant. Since the deck involution is free, its atoms occur in pairs with equal integer multiplicities. Dividing energy and mass by two yields precisely the asserted integer quantization downstairs.
\end{proof}

\begin{lemma}\label{lem:gap}
Under the no-solution assumption of Lemma~\ref{lem:quantization}, $\|V\|_Q$ has a positive lower bound on every closed finite positive energy interval disjoint from $\{ke_n:k\ge1\}$. If $0<\alpha<\beta$ and $[\alpha,\beta]$ contains no such level, there is a single $T<\infty$ with $\Phi_T(\Neh^\beta)\subset\Neh^\alpha$. Every lower sublevel is preserved throughout this flow homotopy.
\end{lemma}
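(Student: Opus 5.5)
The lower bound will be proved by contradiction, and the sublevel deformation will then come from integrating the energy identity \eqref{eq:flow-energy}.

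\emph{Lower bound on $\|V\|_Q$.} Fix a closed interval $[a,b]\subset(0,\infty)$ disjoint from $\{ke_n:k\ge1\}$. Suppose there are $z_j\in\Sigma$ with $J(\Pi(z_j))\in[a,b]$ and $\|V(z_j)\|_Q\to0$. Pass to a subsequence with $J(\Pi(z_j))\to c\in[a,b]$. The last assertion of Lemma~\ref{lem:flow} then makes $u_j=\Pi(z_j)$ a bounded full Palais--Smale sequence at level $c>0$. These functions are nonnegative because $z_j\ge0$. Lemma~\ref{lem:quantization} now gives $c=ke_n$ for some integer $k\ge1$, which contradicts the choice of $[a,b]$. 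Hence there is $\eta>0$ with $\|V(z)\|_Q\ge\eta$ whenever $J(\Pi(z))\in[a,b]$.

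\emph{The uniform time $T$.} Let $[\alpha,\beta]$ contain no level $ke_n$ and apply the bound above on this interval. By \eqref{eq:ray-energy}, $\int_Mz^{\frac{2n}{n-2}}\dd V_g=(nJ(\Pi(z)))^{-2/(n-2)}$ on $\Sigma$. This quantity is therefore bounded above by $(n\alpha)^{-2/(n-2)}$ while the energy lies in $[\alpha,\beta]$. Equation \eqref{eq:flow-energy} then gives
$$
\frac{d}{dt}J(\Pi(z))\le-(n\alpha)^{\frac{n}{n-2}}\eta^2=:-\kappa<0
$$
as long as the trajectory stays in the energy window $[\alpha,\beta]$. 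Energy is nonincreasing along the flow. Take a trajectory starting in $\Neh^\beta$. If its energy remained $\ge\alpha$ for all $t\in[0,T]$ with $T=(\beta-\alpha)/\kappa+1$, the energy would drop below $\beta-\kappa T<\alpha$, a contradiction. So the energy reaches a value below $\alpha$ before time $T$, and monotonicity keeps it below $\alpha$ afterwards. Thus $\Phi_T(\Neh^\beta)\subset\Neh^\alpha$ for this single $T$.

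\emph{Preservation of sublevels.} The last assertion follows from monotonicity alone: $J\circ\Phi_t$ is nonincreasing in $t$ by \eqref{eq:flow-energy}. Hence $\Phi_t(\Neh^\gamma)\subset\Neh^\gamma$ for every $\gamma$ and every $t\in[0,T]$. Continuity of $(t,u)\mapsto\Phi_t(u)$ comes from Lemma~\ref{lem:flow}, so $\Phi_t$ is a homotopy of pairs.

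The main point needing care is the first step. Lemma~\ref{lem:flow} only provides a Palais--Smale sequence after passing to a subsequence, and the convergence of energies must land in a closed interval avoiding the levels $ke_n$. Both are handled by the compactness of $[a,b]$ together with the nonnegativity of $\Pi(z_j)$, which is exactly the hypothesis required by Lemma~\ref{lem:quantization}.
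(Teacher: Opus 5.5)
Your proof is correct and follows the paper's argument. The lower bound on $\|V\|_Q$ comes by contradiction through Lemma~\ref{lem:flow} and Lemma~\ref{lem:quantization}, the uniform time comes from integrating \eqref{eq:flow-energy}, and preservation of sublevels comes from monotonicity. The only cosmetic difference is that you bound $\int_M z^{\frac{2n}{n-2}}\dd V_g$ from above using \eqref{eq:ray-energy} and $J\ge\alpha$, whereas the paper uses Sobolev embedding on $\Sigma$; either gives the fixed negative rate of energy decrease.
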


\begin{proof}
Otherwise \eqref{eq:full-gradient} gives a full Palais--Smale sequence whose limiting energy lies in that interval, contradicting Lemma~\ref{lem:quantization}. Sobolev embedding bounds $\int_Mz^{\frac{2n}{n-2}}\dd V_g$ above on $\Sigma$. Thus \eqref{eq:flow-energy} has a fixed negative upper bound while the trajectory is in the interval. Integrating this bound gives a common finite time; below the target level monotonicity keeps the trajectory there. The same flow gives the asserted homotopies.
\end{proof}

\begin{proposition}\label{prop:localization}
Assume that \eqref{eq:main} has no positive solution. Fix $K\ge1$, $Ke_n<b_*<(K+1)e_n$ and $0<\rho_1<\rho_2$. There is $\varepsilon_0>0$ such that, for $0<\varepsilon<\varepsilon_0$, a common finite flow time maps the pair
$$
(X,A)=(\Neh^{Ke_n+\varepsilon},\Neh^{Ke_n-\varepsilon})
$$
to $(Z,A)$, where
$$
Z=A\cup\{u\in X:\dist_{W_1}(\mu_u,B_K(M))<\rho_2\}.
$$
For a source initially satisfying $\dist_{W_1}(\mu_u,B_K(M))<\rho_1$, the entire flow homotopy lies in $Z$.
\end{proposition}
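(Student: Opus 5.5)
The plan is a standard deformation argument built from Lemma~\ref{lem:quantization} and the flow of Lemma~\ref{lem:flow}, using the gradient lower bound \eqref{eq:flow-energy}. It runs in four steps.

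\textbf{Step 1: a localized gradient bound.} First choose $\varepsilon_0<e_n/2$, so that $[Ke_n-\varepsilon_0,Ke_n+\varepsilon_0]$ contains only the quantized level $Ke_n$. The key claim is that there is $\eta>0$ with the following property: if $z\in\Sigma$, $|J(\Pi(z))-Ke_n|\le\varepsilon_0$ and $\dist_{W_1}(\mu_{z},B_K(M))\ge\rho_1$, then $\|V(z)\|_Q\ge\eta$. Note that $\mu_z=\mu_{\Pi(z)}$, since the measure is scale invariant. Suppose the claim fails, and take a sequence with $\|V(z_j)\|_Q\to0$. By Lemma~\ref{lem:flow}, the functions $\Pi(z_j)$ form a bounded nonnegative full Palais--Smale sequence. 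By Lemma~\ref{lem:quantization}, after passing to a subsequence, the energies converge to some $ke_n$, and the choice of $\varepsilon_0$ forces $k=K$. The same lemma gives $\mu_{\Pi(z_j)}\to\frac1K\sum_i\delta_{P_i}$, and this limit lies in $B_K(M)$. That contradicts the distance bound $\ge\rho_1$.

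\textbf{Step 2: a Lipschitz estimate for the measure map.} Next I need $u\mapsto\mu_u$ to be uniformly Lipschitz from $(\Sigma,\|\cdot\|_Q)$ into $(\Prob(M),W_1)$. Since $W_1$ is bounded by $\diam(M)$ times total variation, it suffices to bound the $L^1$ distance of the normalized densities. The mean-value estimate from the proof of Lemma~\ref{lem:flow} bounds $\|z^{2n/(n-2)}-y^{2n/(n-2)}\|_1$ by $C\|z-y\|_Q$. On a finite energy window, \eqref{eq:ray-energy} keeps the denominators $\int z^{2n/(n-2)}$ bounded below. Together these give a constant $C_0$ with
$$
W_1(\mu_{z(s)},\mu_{z(t)})\le C_0\int_s^t\|V(z(r))\|_Q\dd r .
$$

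\textbf{Step 3: trading energy loss for measure displacement.} By \eqref{eq:flow-energy}, the energy loss along a trajectory is $\int c\,\|V\|_Q^2\,dt$, where $c$ is bounded below on the window. Consider a trajectory that crosses from distance $<\rho_1$ to distance $\ge\rho_2$ while its energy stays in the window. During the time it spends in the intermediate shell $\rho_1\le\dist\le\rho_2$ we have $\|V\|_Q\ge\eta$, so
$$
\|V\|_Q^2\ge\eta\|V\|_Q .
$$
Combining this with Step 2, the energy lost during the crossing is at least
$$
c\,\eta\,\frac{\rho_2-\rho_1}{C_0}=:\delta .
$$
Now shrink $\varepsilon_0$ so that $2\varepsilon_0<\delta$. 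The energy drop from $Ke_n+\varepsilon$ to $Ke_n-\varepsilon$ is at most $2\varepsilon<\delta$. Hence a trajectory starting at distance $<\rho_1$ never reaches distance $\rho_2$ before it enters $A$. Once in $A$ it stays there, by monotonicity of $J$ along the flow. This proves the final assertion: the whole homotopy of such a source lies in $Z$.

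\textbf{Step 4: a common flow time.} Fix $T=2\varepsilon/(c\,\eta^2)+T'$, with $T'$ chosen below, and take any $u\in X$.
\begin{itemize}
\item If the trajectory of $u$ is in $A$ at time $T$, it lies in $Z$ there.
\item Otherwise its energy stays in $[Ke_n-\varepsilon,Ke_n+\varepsilon]$ on $[0,T]$. The time it can spend in the region $\dist\ge\rho_1$ is then at most $2\varepsilon/(c\,\eta^2)$.
\end{itemize}
In the second case, there is therefore a time $t_0\le T$ with distance $<\rho_1$ at $t_0$. Step 3, applied from $t_0$ onward, shows the distance stays $<\rho_2$ for all later times, including $T$. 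The one remaining issue is that the trajectory might still be at distance $\ge\rho_2$ on the initial segment $[0,t_0]$. This does not matter, because the statement only requires the endpoint $\Phi_T(u)$ to lie in $Z$, and the pair map sends $A$ into $A$. Thus $T'=0$ suffices.

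The main obstacle is Step 1 together with the uniformity in Step 2. Step 1 requires $\varepsilon_0$ to be chosen before the contradiction argument, and Lemma~\ref{lem:quantization} must be applied to nonnegative sequences only; this is guaranteed because $\Sigma$ consists of nonnegative functions. Step 2 depends on the uniform lower bound for $\int z^{2n/(n-2)}$ on the energy window.
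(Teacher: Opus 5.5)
Your proposal is correct and follows essentially the same route as the paper: a gradient lower bound off the inner measure tube obtained from quantization, a uniform Lipschitz bound for $z\mapsto\mu_z$, the crossing estimate $c\eta(\rho_2-\rho_1)/C_0>2\varepsilon_0$, and a common time of order $2\varepsilon/(c\eta^2)$. The only cosmetic difference is that you work in a window around $Ke_n$, so only $k=K$ occurs. The paper instead uses the whole energy cap below $b_*$ and the inclusion $B_k(M)\subset B_K(M)$ for $k\le K$.
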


\begin{proof}
On $\Sigma\cap\{J\circ\Pi\le b_*\}$ the denominator in \eqref{eq:mass-measure} is bounded below. The power estimate
$$
\int_M|z^{\frac{2n}{n-2}}-y^{\frac{2n}{n-2}}|\dd V_g
\le \frac{2n}{n-2}(\|z\|_{\frac{2n}{n-2}}+\|y\|_{\frac{2n}{n-2}})^{\frac{n+2}{n-2}}\|z-y\|_{\frac{2n}{n-2}}
$$
and normalization show that $z\mapsto\mu_z$ is uniformly Lipschitz from the $Q$ norm to $W_1$ on this set. The distance to $B_K(M)$ is therefore also Lipschitz, with a fixed constant $C$.

For each $\rho>0$, $\|V\|_Q\ge\eta_\rho>0$ on the part of the energy cap outside the tube of radius $\rho$. Failure would give, by Lemmas~\ref{lem:flow} and \ref{lem:quantization}, an atomic limit with at most $K$ points. The positive lower energy bound excludes level zero.

On the energy cap, \eqref{eq:flow-energy} gives $-(J\circ\Pi)'\ge c_0\|V\|_Q^2$. During a crossing from the distance $\rho_1$ to the distance $\rho_2$ that stays outside the inner tube, the energy loss is at least
$$
c_0\eta_{\rho_1}\int\|z'\|_Q\dd t
\ge\frac{c_0\eta_{\rho_1}}C(\rho_2-\rho_1)>0.
$$
Choose $2\varepsilon_0$ smaller than this last quantity, with $Ke_n+\varepsilon_0<b_*$ and $Ke_n-\varepsilon_0>0$. A trajectory starting in the inner tube cannot leave the outer tube before reaching $A$. If an arbitrary trajectory starting in $X$ is outside both $A$ and the outer tube at time $T$, it never previously entered the closed inner tube; otherwise the preceding crossing estimate applies. Its energy decay has therefore been at least $c_0\eta_{\rho_1}^2$ throughout. Choosing $T>2\varepsilon/(c_0\eta_{\rho_1}^2)$ is a contradiction. This proves both assertions. All maps are a single continuous finite-time flow on the indicated spaces.
\end{proof}

\section{Concentration with exactly equal local masses}\label{sec:local}

In this section $n\ge6$. Choose a smooth radial cutoff $\chi$ which is one near zero and has support in a fixed ball smaller than the injectivity radius. At its vanishing endpoint choose it flat, so every positive power of $\chi$ is smooth. In geodesic normal coordinates put
\begin{equation}\label{eq:cutoff-bubble}
U_{\delta,P}(x)=\chi(d_g(P,x))\delta^{-(n-2)/2}
U\bigl(\delta^{-1}\exp_P^{-1}(x)\bigr).
\end{equation}
The radial expression makes this independent of the orthonormal frame at $P$; it is extended by zero outside the coordinate ball. Constants in local estimates may depend on the cutoff, $n$, $M$, $g$ and $\psi$.

\begin{lemma}\label{lem:local-expansion}
Uniformly for $P\in M$ as $\delta\to0$,
\begin{equation}\label{eq:local-energy}
J(U_{\delta,P})
=e_n+\frac12(\psi-R_g)(P)\delta^2\int_{\R^n}U^2\dd x+o(\delta^2).
\end{equation}
The same expansion holds for $J(\Pi(U_{\delta,P}))$. Moreover,
\begin{equation}\label{eq:residual}
\|L_\psi U_{\delta,P}-U_{\delta,P}^{\frac{n+2}{n-2}}\|_{H^{-1}}
\le
\begin{cases}
C\delta^2(1+|\log\delta|)^{2/3},&n=6,\\
C\delta^2,&n\ge7.
\end{cases}
\end{equation}
\end{lemma}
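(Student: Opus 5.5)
We prove Lemma~\ref{lem:local-expansion} by a direct computation in geodesic normal coordinates at $P$. All error terms come from the Taylor expansion of the metric, uniformly in $P$ by compactness of $M$.

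\emph{Metric expansion.} In normal coordinates,
$$
\sqrt{\det g}=1-\tfrac16R_{ij}(P)x^ix^j+O(|x|^3),\qquad g^{ij}=\delta^{ij}+\tfrac13R_{ikjl}(P)x^kx^l+O(|x|^3).
$$
$U_{\delta,P}$ is radial in these coordinates, and the Gauss lemma gives $|\nabla_g U_{\delta,P}|=|\partial_rU_{\delta,P}|$. So only the volume density matters.

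\emph{The three terms of $J(U_{\delta,P})$.} We expand each term after the scaling $x=\delta y$.
\begin{itemize}
\item Critical term. Using $\int(\frac16R_{ij}x^ix^j)f(|x|)\,dx=\frac{R_g(P)}{6n}\int|x|^2f$,
$$
\int U_{\delta,P}^{\frac{2n}{n-2}}\dd V_g=ne_n-\frac{R_g(P)}{6n}\delta^2\int_{\R^n}|y|^2U^{\frac{2n}{n-2}}dy+o(\delta^2).
$$
For $n\ge6$ the error is $O(\delta^3)$, and the cutoff tail is $O(\delta^n)$.
\item Gradient term. In the same way,
$$
\int|\nabla U_{\delta,P}|^2\dd V_g=\int|\nabla U|^2-\frac{R_g(P)}{6n}\delta^2\int|y|^2|\nabla U|^2+o(\delta^2).
$$
The cutoff contributes $O(\delta^{n-2})=o(\delta^2)$ once $n>4$.
\item Potential term. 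Since $U\in L^2(\R^n)$ exactly when $n>4$,
$$
\int\psi U_{\delta,P}^2\dd V_g=\psi(P)\delta^2\int U^2+o(\delta^2).
$$
For $n=6$ the truncated $L^2$ integral converges with a $\delta^2\cdot O(\delta^2|\log\delta|)$ correction, which is still $o(\delta^2)$.
\end{itemize}

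\emph{Collecting the curvature terms.} The curvature contributions must combine to $-\frac12R_g(P)\delta^2\int U^2$. This follows from the two radial identities for the Euclidean bubble that come from testing $-\frac{4(n-1)}{n-2}\Delta U=U^{\frac{n+2}{n-2}}$ against $|y|^2U$, together with the explicit beta integrals for $U$. Concretely, one checks
$$
\frac{4(n-1)}{n-2}\int|y|^2|\nabla U|^2-\frac{n-2}{n}\int|y|^2U^{\frac{2n}{n-2}}\cdot\frac{n}{n-2}\cdot\frac{?}{}\;\Longrightarrow\;\frac{R}{6n}(\cdots)=\frac{R}{2}\int U^2 .
$$
This is the classical Aubin computation, and the prefactor $\frac{4(n-1)}{n-2}$ is exactly what makes the conformal Laplacian coefficient appear. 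Thus it suffices to verify
$$
\frac{1}{6n}\Bigl(\frac{2(n-1)}{n-2}\int|y|^2|\nabla U|^2-\frac{n-2}{2n}\int|y|^2U^{\frac{2n}{n-2}}\Bigr)=\frac12\int U^2 .
$$
Both sides reduce to beta functions, or to an integration-by-parts identity from $\operatorname{div}(|y|^2U\nabla U)$. I expect this bookkeeping to be the main place where constants must be checked carefully.

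\emph{The projection $\Pi$.} For $J(\Pi(U_{\delta,P}))$ we use \eqref{eq:ray-energy}. Write $Q(U_{\delta,P},U_{\delta,P})=ne_n(1+a\delta^2)$ and $\int U_{\delta,P}^{2n/(n-2)}=ne_n(1+b\delta^2)$. Then
$$
J(\Pi(U_{\delta,P}))=e_n\Bigl(1+\bigl(\tfrac n2a-\tfrac{n-2}2b\bigr)\delta^2\Bigr)+o(\delta^2),
$$
while $J(U_{\delta,P})=e_n+\bigl(\frac12a-\frac{n-2}{2n}b\bigr)ne_n\delta^2+o(\delta^2)$. The two expansions agree to order $\delta^2$.

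\emph{Residual estimate \eqref{eq:residual}.} Write
$$
L_\psi U_{\delta,P}-U_{\delta,P}^{\frac{n+2}{n-2}}=-\tfrac{4(n-1)}{n-2}(\Delta_g-\Delta_{\mathrm{eucl}})U_{\delta,P}+\psi U_{\delta,P}+(\text{cutoff terms}).
$$
The Euclidean bubble equation cancels the leading part. Since $U_{\delta,P}$ is radial, $\Delta_g-\Delta_{\mathrm{eucl}}$ acts as $\partial_r\log\sqrt{\det g}\,\partial_r=O(|x|)\partial_r$. We estimate each error $f$ in $L^{2n/(n+2)}$, which controls $H^{-1}$ by duality.
\begin{itemize}
\item The terms $O(|x|)|\partial_rU_{\delta,P}|$ and $\psi U_{\delta,P}$ scale like $\delta^2(1+|y|)^{-(n-2)}$. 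Their $L^{2n/(n+2)}$ norm is $\delta^2\bigl(\int_{|y|<1/\delta}(1+|y|)^{-(n-2)\frac{2n}{n+2}}\bigr)^{\frac{n+2}{2n}}$.
\item For $n\ge7$ this integral converges, giving $C\delta^2$.
\item For $n=6$ the exponent is $4\cdot\frac{12}{8}=6=n$, so the integral is logarithmic. This gives $\delta^2|\log\delta|^{\frac{n+2}{2n}}=\delta^2|\log\delta|^{2/3}$, which is the stated bound.
\item The cutoff terms are $O(\delta^{(n-2)/2})$, and this is $\le C\delta^2$ once $n\ge6$.
\end{itemize}
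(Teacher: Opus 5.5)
Your route is the paper's. You use normal coordinates and the Gauss lemma, so that only the averaged volume density $1-\frac{R_g(P)}{6n}r^2+O(r^4)$ enters. You then take the potential contribution $\frac12\psi(P)\delta^2\int U^2$, the ray formula for $\Pi$, and the pointwise bound of the residual by a multiple of $U_{\delta,P}$ in the core. This is followed by $L^{2n/(n+2)}\subset H^{-1}$ with the borderline exponent $(n-2)\frac{2n}{n+2}=n$ at $n=6$. All of these steps are correct.

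\textbf{The missing step.} The one step you leave undone is the curvature constant, and that is the heart of the lemma. The display containing ``?'' is not an argument and should be deleted. The identity you isolate after it,
$$\frac{1}{6n}\Bigl(\frac{2(n-1)}{n-2}\int|y|^2|\nabla U|^2-\frac{n-2}{2n}\int|y|^2U^{\frac{2n}{n-2}}\Bigr)=\frac12\int U^2,$$
is true. The paper closes it with the two moments
$$\frac{4(n-1)}{n-2}\int|y|^2|\nabla U|^2=n(n+2)\int U^2,\qquad \int|y|^2U^{\frac{2n}{n-2}}=\frac{n^2(n-4)}{n-2}\int U^2.$$
With these, the bracket equals $\frac{n(n+2)}{2}\int U^2-\frac{n(n-4)}{2}\int U^2=3n\int U^2$.

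\textbf{Integration by parts alone does not suffice.} Testing the bubble equation against $|y|^2U$ yields only one relation:
$$\frac{4(n-1)}{n-2}\int|y|^2|\nabla U|^2-\frac{4n(n-1)}{n-2}\int U^2=\int|y|^2U^{\frac{2n}{n-2}}.$$
This reduces your bracket to $\frac1n\cdot\frac{4(n-1)}{n-2}\int|y|^2|\nabla U|^2+2(n-1)\int U^2$. At least one beta-integral evaluation, giving the first moment above, is therefore genuinely needed to finish.
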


\begin{proof}
The beta integral and integration by parts against $|x|^2U$ give
\begin{align*}
\int_{\R^n}U^2\dd x
&=[4n(n-1)]^{(n-2)/2}\pi^{n/2}
\frac{\Gamma(n/2-2)}{\Gamma(n-2)},\\
\frac{4(n-1)}{n-2}\int_{\R^n}|x|^2|\nabla U|^2\dd x
&=n(n+2)\int_{\R^n}U^2\dd x,\\
\int_{\R^n}|x|^2U^{\frac{2n}{n-2}}\dd x
&=\frac{n^2(n-4)}{n-2}\int_{\R^n}U^2\dd x.
\end{align*}
All these moments are finite for $n\ge6$. The expansion must retain the scalar-curvature term, since its sign is decisive below. By the Gauss lemma, the squared metric gradient of a radial function is exactly the square of its radial derivative. The angular average of the normal-coordinate volume density is
$$
1-\frac{R_g(P)}{6n}r^2+O(r^4).
$$
Odd Taylor terms have zero angular average. The linear potential term also integrates to zero. Substitution in $J$ therefore gives the second-order curvature coefficient
$$
-\frac{n+2}{12}\int_{\R^n}U^2\dd x
+\frac{n-4}{12}\int_{\R^n}U^2\dd x
=-\frac12\int_{\R^n}U^2\dd x.
$$
The potential contributes $\psi(P)\delta^2\int_{\R^n}U^2\dd x/2$. The gradient cutoff error is $O(\delta^{n-2})$, and the critical-mass cutoff error is $O(\delta^n)$. For $n\ge7$ the fourth gradient moment and the second moment of $U^2$ are finite, so the other remainders are $O(\delta^4)$. For $n=6$, the same truncated moments are $O(1+|\log\delta|)$ and yield $o(\delta^2)$. This proves \eqref{eq:local-energy}. The individual integrals $Q(U_{\delta,P},U_{\delta,P})$ and $\int_MU_{\delta,P}^{\frac{2n}{n-2}}\dd V_g$ equal $ne_n+O(\delta^2)$. Thus the ray parameter in \eqref{eq:projection} is $1+O(\delta^2)$, and changing to that parameter changes the energy by $O(\delta^4)$.

In the cutoff core, put $r=d_g(P,x)$. Then $\Delta_gr=(n-1)/r+O(r)$ and $r|\partial_r U_{\delta,P}|\le(n-2)U_{\delta,P}$. Hence the residual is bounded by $CU_{\delta,P}$ there. On the fixed cutoff annulus it is $O(\delta^{(n-2)/2})$. The inclusion $L^{2n/(n+2)}\subset H^{-1}$ and rescaling now give \eqref{eq:residual}. The rescaled core integral is finite for $n>6$. For $n=6$ it grows logarithmically, with norm exponent $2/3$. In particular, for any fixed finite collection of scales tending to zero,
\begin{equation}\label{eq:residual-small-square}
\left(\sum_i\|L_\psi U_{\delta_i,P_i}-U_{\delta_i,P_i}^{\frac{n+2}{n-2}}\|_{H^{-1}}\right)^2
=o\left(\sum_i\delta_i^2\right).
\end{equation}
\end{proof}

\begin{lemma}\label{lem:normal}
Fix $K$ and a compact family of center configurations with pairwise distances bounded below. For sufficiently small scales, if $v$ is $Q$-orthogonal to $U_{\delta_i,P_i}$ and to its scale and center derivatives for every $i$, then
\begin{equation}\label{eq:normal-coercivity}
Q(v,v)-\frac{n+2}{n-2}\sum_{i=1}^K\int_MU_{\delta_i,P_i}^{\frac{4}{n-2}}v^2\dd V_g
\ge c\|v\|_Q^2
\end{equation}
for a constant $c>0$. Here the cutoffs are chosen sufficiently small to have disjoint supports. The constants may depend on the fixed $K$ and center family; they are independent of the ratios of the bubble scales.
\end{lemma}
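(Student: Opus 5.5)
The plan is to localize to each bubble and reduce to the nondegeneracy of the Euclidean bubble. Because the centers are uniformly separated and the cutoffs have disjoint supports, the weights $U_{\delta_i,P_i}^{4/(n-2)}$ are supported in disjoint balls $B_i$. Fix a partition of unity $\eta_0,\eta_1,\dots,\eta_K$ with $\eta_i^2$ summing to one, $\eta_i\equiv1$ on the support of the $i$-th cutoff and supported in a slightly larger ball, $\eta_0$ supported away from all the cutoff cores, and gradients bounded independently of the scales. The IMS-type localization formula gives $Q(v,v)=\sum_{i=0}^KQ(\eta_iv,\eta_iv)-\frac{4(n-1)}{n-2}\int_M\sum_i|\nabla\eta_i|^2v^2\dd V_g$. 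The error term is bounded by $C\|v\|_2^2$ supported on fixed annuli. The $i$-th weighted term only sees $\eta_iv$. So it suffices to prove the inequality separately for each $w_i=\eta_iv$, with a small loss, and then to absorb the $L^2$ error.

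The single-bubble estimate I would prove by contradiction, rescaling. Suppose there are scales $\delta_j\to0$, centers $P_j$ and functions $w_j$ with $\|w_j\|_Q=1$, almost orthogonal to $U_{\delta_j,P_j}$ and to its $n+1$ derivatives, such that the left side is at most $o(1)$. Blow up by setting $\tilde w_j(y)=\delta_j^{(n-2)/2}w_j(\exp_{P_j}(\delta_jy))$. Then $\tilde w_j$ is bounded in $D^{1,2}(\R^n)$, and a subsequence converges weakly to some $\tilde w$. The weighted integral $\int U^{4/(n-2)}\tilde w_j^2$ is weakly continuous, because the weight lies in $L^{n/2}$ and decays. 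The orthogonality passes to the limit and gives orthogonality, in $D^{1,2}$, to $U$, $\partial_\delta U$ and $\partial_{x_k}U$. By the nondegeneracy of $U$ (the kernel of the linearized operator is spanned by the dilation and translation generators), the quadratic form $\frac{4(n-1)}{n-2}\int|\nabla\phi|^2-\frac{n+2}{n-2}\int U^{4/(n-2)}\phi^2$ is coercive on that orthogonal complement, with a gap given by the third eigenvalue of the associated weighted problem. This gap follows from the spectral decomposition on $S^n$ via stereographic projection, where the eigenvalues are explicit. Splitting $\tilde w_j=\tilde w+(\tilde w_j-\tilde w)$, the weighted term of the remainder tends to zero. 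Hence $\liminf$ of the left side is at least $c\|\tilde w\|^2+(1-\|\tilde w\|^2)\cdot$(positive), a contradiction. The metric and potential perturbations are $O(\delta_j^2)$ corrections in the rescaled picture, and the lower-order term $\psi w^2$ is harmless because $\|v\|_2$ is controlled.

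To finish, the orthogonality of $v$ to the $i$-th family transfers to approximate orthogonality of $\eta_iv$. The reason is that the bubble and its derivatives are exponentially small, in the sense of $O(\delta_i^{(n-2)/2})$, where $\eta_i\ne1$, and they are disjoint from the other localizations. The $\eta_0$ piece has no weight and gives $Q(\eta_0v,\eta_0v)\ge0$. Summing the local estimates yields the left side $\ge c\sum\|\eta_iv\|_Q^2-C\|v\|_{L^2(\text{annuli})}^2$.

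\textbf{Main obstacle.} The expected hardest step is absorbing this $L^2$ annulus error without compactness. I would handle it by running the whole contradiction argument globally instead: take $v_j$ normalized and violating the inequality with $c=1/j$. On the unscaled manifold, $v_j\rightharpoonup v_\infty$ weakly in $H^1$. The weighted terms concentrate at the points and tend to zero for any fixed weak limit, so $Q(v_\infty,v_\infty)\le\liminf(\text{left side})\to0$ forces $v_\infty=0$. Then $\|v_j\|_2\to0$, the localization error vanishes, and the blow-up analysis above applies to each $\eta_iv_j$. Independence from the ratios of the scales is automatic, since each bubble is treated at its own center and scale and the centers are separated.
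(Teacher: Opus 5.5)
\textbf{Gap in the final step.} Your ingredients match the paper's: contradiction along scales tending to zero, blow-up at each center at its own scale, and the spherical gap
$\frac{n+2}{n-2}\int_{\R^n}U^{\frac{4}{n-2}}f^2\dd x\le\frac{n}{n+4}\cdot\frac{4(n-1)}{n-2}\int_{\R^n}|\nabla f|^2\dd x$ on the orthogonal complement. However, the step you call the main obstacle is closed circularly.

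You claim $Q(v_\infty,v_\infty)\le\liminf$ of the left side of \eqref{eq:normal-coercivity} because the weights concentrate. That disposes only of $\int_MU_{\delta_i,P_i}^{4/(n-2)}v_\infty^2\dd V_g$ and of the cross terms. With $w_j=v_j-v_\infty$ one has
\[
Q(v_j,v_j)-\frac{n+2}{n-2}\sum_i\int_MU_{\delta_i,P_i}^{\frac{4}{n-2}}v_j^2\dd V_g
=Q(v_\infty,v_\infty)+\Bigl[Q(w_j,w_j)-\frac{n+2}{n-2}\sum_i\int_MU_{\delta_i,P_i}^{\frac{4}{n-2}}w_j^2\dd V_g\Bigr]+o(1).
\]
The bracket is the indefinite form itself, and its asymptotic nonnegativity is exactly the blow-up estimate. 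You invoke that estimate only after deducing $v_\infty=0$ and $\|v_j\|_2\to0$, so the argument runs in a loop.

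The same dependence sits in your single-bubble step. When $\psi$ has a negative part, bounding the rescaled $D^{1,2}$ norm of the blow-up sequence by the $Q$ norm requires the $L^2$ mass to vanish. So ``$\psi w^2$ is harmless because $\|v\|_2$ is controlled'' presupposes what your last paragraph is trying to prove.

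\textbf{Repair.} Reverse the order, which is how the paper argues, and split first.
\begin{itemize}
\item Rellich gives $w_j\to0$ in $L^2$, hence $Q(w_j,w_j)=\frac{4(n-1)}{n-2}\|\nabla w_j\|_2^2+o(1)=1-Q(v_\infty,v_\infty)+o(1)$.
\item $w_j$ is asymptotically orthogonal to the normalized bubbles and parameter derivatives, because these tend weakly to zero.
\item Blow up $w_j$, not $v_j$, at each center. Use lower semicontinuity on fixed rescaled balls, where the metric errors really are $o(1)$; on the full support of your $\eta_i$ they are only $O(r^2)$. Then let the rescaled radius grow and use disjointness of the center neighborhoods. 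This gives $\sum_i\frac{4(n-1)}{n-2}\int|\nabla f_i|^2\le1-Q(v_\infty,v_\infty)\le1$.
\item Hence the weighted sum for $v_j$ has $\limsup$ at most $\frac{n}{n+4}$, and the left side is at least $\frac{4}{n+4}-o(1)$.
\end{itemize}
You never need $v_\infty=0$. The IMS localization also becomes superfluous: rescaling separately at disjoint centers already decouples the bubbles and makes the estimate independent of the scale ratios. If you keep the localization, note that orthogonality transfers to $\eta_iv$ exactly, not approximately, because $\eta_i\equiv1$ on a neighborhood of the supports of the bubble and its derivatives.
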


\begin{proof}
For a single Euclidean bubble, stereographic conformal covariance converts the gradient inner product to the quadratic form of $-\frac{4(n-1)}{n-2}\Delta_{S^n}+n(n-1)$. The bubble amplitude corresponds to constant functions, and its translation and scale derivatives correspond to first spherical harmonics. The first remaining Laplace eigenvalue is $2(n+1)$. Consequently, on their gradient-orthogonal complement,
$$
\frac{n+2}{n-2}\int_{\R^n}U^{\frac{4}{n-2}}f^2\dd x
\le\frac{n(n+2)}{8(n+1)+n(n-2)}
\frac{4(n-1)}{n-2}\int_{\R^n}|\nabla f|^2\dd x
=\frac{4n(n-1)}{(n+4)(n-2)}\int_{\R^n}|\nabla f|^2\dd x.
$$

Suppose the stated estimate fails along scales tending to zero, and normalize $\|v_j\|_Q=1$. Pass to a weak unscaled limit $v_\infty$ and set $w_j=v_j-v_\infty$. Rellich compactness and the Hilbert norm splitting give
$$
w_j\to0\ \hbox{in }L^2(M),\qquad
Q(w_j,w_j)=1-Q(v_\infty,v_\infty)+o(1).
$$
Coercivity gives $Q(v_\infty,v_\infty)\ge0$. Rescale $w_j$ about each center at its own bubble scale. On every fixed rescaled ball a diagonal subsequence converges weakly in $\dot H^1$ and strongly in local $L^2$ to a function $f_i$. The normalized orthogonality conditions converge to the Euclidean ones: the normalized parameter derivatives converge strongly in the rescaled gradient norm, and the energy of the fixed function $v_\infty$ on shrinking balls tends to zero. The fixed weak limit has vanishing weighted contribution: on a small center ball use absolute continuity of its $L^{\frac{2n}{n-2}}$ integral, and on the complement use the vanishing $L^{n/2}$ norm of the bubble weight. For $w_j$, Rellich compactness gives weighted $L^2$ convergence on bounded rescaled balls. Outside such balls, H\"older's inequality bounds the contribution by the $L^{n/2}$ tail norm of $U^{\frac{4}{n-2}}$, which tends to zero.

The center neighborhoods are disjoint. Lower semicontinuity of their rescaled gradient energies, followed by increasing the rescaled radius, gives
$$
\sum_i \frac{4(n-1)}{n-2}\int_{\R^n}|\nabla f_i|^2\dd x
\le1-Q(v_\infty,v_\infty)\le1.
$$
Thus
$$
\limsup_j \frac{n+2}{n-2}\sum_i\int_MU_{\delta_i,P_i}^{\frac{4}{n-2}}v_j^2\dd V_g
\le\frac{n}{n+4}.
$$
This proves \eqref{eq:normal-coercivity}, with any fixed $c<4/(n+4)$ once all scales are sufficiently small. The argument rescaled each center separately and did not constrain their scale ratios.
\end{proof}

\begin{proposition}\label{prop:balanced}
Fix $K$ and a compact family of ordered configurations $P=(P_1,\ldots,P_K)$ with pairwise distances at least $d_0>0$. Suppose $(\psi-R_g)(P_i)\ge h_0>0$ throughout this family. Let $\theta_i(P,\cdot)$ be a smooth nonnegative partition of one, with uniformly bounded derivatives, equal to $\delta_{ij}$ on fixed disjoint balls about $P_j$. There is $\rho_0>0$ such that every $u\in\Neh$ satisfying
\begin{equation}\label{eq:balanced-conditions}
W_1\left(\mu_u,\frac1K\sum_i\delta_{P_i}\right)<\rho_0,
\qquad
\int_M\theta_i(P,x)\dd\mu_u(x)=\frac1K\quad(1\le i\le K)
\end{equation}
has $J(u)>Ke_n$. The assertion is invariant under simultaneous permutations of centers and partition functions, so it also holds for compact unordered families. The neighborhood and all constants in this proposition are chosen after $K$ and the separated center family have been fixed.
\end{proposition}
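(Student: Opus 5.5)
I argue by contradiction and use the standard Bahri--Coron parametrization near the bubble manifold. Suppose there are $u_j\in\Neh$ satisfying \eqref{eq:balanced-conditions} with $\rho_0=\rho_j\to0$ and $J(u_j)\le Ke_n$. Since $J(u_j)$ is bounded and $\mu_{u_j}\to\frac1K\sum_i\delta_{P_i^{(j)}}$, I first need $u_j$ to be $H^1$-close to a sum of $K$ concentrating bubbles. Measure concentration at $K$ separated points with equal masses, together with $u\in\Neh$, gives on each small ball $B_i$ about $P_i$ a critical mass $\int_{B_i}u_j^{2n/(n-2)}\approx ne_n$. Outside these balls the mass is small. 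I then apply the sharp local Sobolev inequality on each $B_i$: the local Dirichlet energy is at least $S$ times the local mass to the power $(n-2)/n$, with lower-order errors. Combining this with $J(u_j)\le Ke_n$ forces near equality in every ball. Struwe/Lions profile decomposition (Mazumdar) then gives $u_j=\sum_i\alpha_iU_{\delta_i,P_i'}+v$ with $\delta_i\to0$, $P_i'$ close to $P_i$, $\alpha_i\to1$ and $\|v\|_Q\to0$.

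Next I choose the optimal parameters $(\alpha_i,\delta_i,P_i')$ by minimizing $\|u-\sum\alpha_iU_{\delta_i,P_i'}\|_Q$. This makes $v$ $Q$-orthogonal to each bubble and to its scale and center derivatives, which is exactly the setting of Lemma~\ref{lem:normal}.

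Then I expand $J$. By disjointness of supports, $J(\sum\alpha_iU_i+v)$ splits into $\sum_i J(\alpha_iU_i)$ plus a linear term and a quadratic term in $v$, plus $o(\|v\|_Q^2)$. The linear term is $\ip{L_\psi U_i-U_i^{\frac{n+2}{n-2}}}{v}$ up to errors controlled by $|\alpha_i-1|$. The quadratic term is bounded below by $\frac c2\|v\|_Q^2$ using \eqref{eq:normal-coercivity}. The residual bound \eqref{eq:residual-small-square} lets me absorb the linear term by Cauchy--Schwarz, at a cost $o(\sum\delta_i^2)$. Lemma~\ref{lem:local-expansion} gives
\[
\max_t J(tU_i)=e_n+\tfrac12(\psi-R_g)(P_i')\delta_i^2\int U^2+o(\delta_i^2)\ge e_n+c h_0\delta_i^2 .
\]
However, $J(\alpha_iU_i)$ is not $\max_tJ(tU_i)$. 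It falls below by roughly $ne_n(\alpha_i-1)^2$ times a constant, and the Nehari constraint alone only fixes one combination of the $\alpha_i$. This is where the equal-mass constraint enters.

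The condition $\int\theta_i\dd\mu_u=1/K$ for all $i$ says that the local masses $\int_{B_i}u^{2n/(n-2)}$ are all equal. Expanding each local mass gives $\alpha_i^{2n/(n-2)}ne_n+O(\delta_i^2)+O(\|v\|_Q)$-type terms, where the terms linear in $v$ vanish to leading order by orthogonality to $U_i$. Together with membership in $\Neh$, this should pin $\alpha_i-1=O(\sum_k\delta_k^2)+O(\|v\|_Q^2)$. The deficit $(\alpha_i-1)^2$ is then $o(\sum\delta_k^2)+o(\|v\|_Q^2)$. Summing, I obtain
\[
J(u)\ge Ke_n+c h_0\sum\delta_i^2+\tfrac c4\|v\|_Q^2-o\Bigl(\sum\delta_i^2+\|v\|_Q^2\Bigr)>Ke_n,
\]
a contradiction. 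Permutation invariance is immediate from the symmetric form of the conditions.

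The main obstacle is the bookkeeping in this step. The linear-in-$v$ parts of the local masses, such as $\int U_i^{\frac{n+2}{n-2}}v$, are not exactly zero, because orthogonality is in $Q$ rather than in the nonlinear pairing. Their difference is controlled by the residual, so it contributes $O(\delta_i^2\|v\|_Q)$, which should be acceptable. The cross terms between $\alpha_i-1$ and $v$ must also be shown to be of lower order. If the first attempt fails, I would instead reduce directly: for fixed $v$, maximize over the constrained $\alpha$. I would show that the resulting function of $v$ is at least $\sum_i\max_tJ(tU_i)+\frac c4\|v\|_Q^2-o(\cdot)$, using that the $\alpha$-Hessian is negative definite, of size $-\frac{4}{n-2}ne_n$, on each bubble.
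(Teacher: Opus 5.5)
Your proposal follows essentially the same route as the paper: contradiction, localized sharp Sobolev inequalities forcing total mass $Kne_n$ and near-equality on each ball, a modulated decomposition with $Q$-orthogonality, the amplitude bound $\max_i|t_i-1|\le C(\sum_i\delta_i^2+\|v\|_Q^2)$ from the exact equal-mass constraints plus the Nehari identity, and the energy expansion via Lemmas~\ref{lem:local-expansion} and~\ref{lem:normal}. The one step to correct is the decomposition itself: $u_j$ is not a Palais--Smale sequence, so Struwe's or Mazumdar's theorem does not apply. Instead, apply Sobolev stability to the localized functions; the paper does this with IMS localization using $u_j\to0$ in $L^2$, a diagonal choice of shrinking radii, and Bianchi--Egnell, and Lions' concentration-compactness for Euclidean minimizing sequences would also work.
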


\begin{proof}
Suppose there are configurations $P^{(j)}$ converging within the compact family and functions $u_j\in\Neh$ with $J(u_j)\le Ke_n$, whose normalized critical measures converge to the corresponding equal-weight atomic measure, and which satisfy the exact equalities in \eqref{eq:balanced-conditions}. Since
$$
Q(u_j,u_j)=\int_Mu_j^{\frac{2n}{n-2}}\dd V_g=nJ(u_j)\le Kne_n,
$$
coercivity and Sobolev embedding bound these integrals above and away from zero. Pass to a limit $D\in(0,Kne_n]$. After a subsequence, let $u_\infty$ be the weak $H^1$ limit. Rellich compactness gives strong $L^2$ and almost-everywhere convergence. On each compact set avoiding the limiting centers, weak convergence of the critical measures makes the $L^{\frac{2n}{n-2}}$ mass tend to zero. Hence $u_\infty$ vanishes there. The omitted finite set has zero volume, so $u_\infty=0$ almost everywhere and $u_j\to0$ in $L^2$.

For a fixed small radius $r$, take a smooth nonnegative quadratic partition $\chi_0^2+\sum_{i=1}^K\chi_i^2=1$, with $\chi_i$ supported in $B(P_i^{(j)},2r)$ and equal to one in $B(P_i^{(j)},r)$, and with $|\nabla\chi_i|\le C/r$ for $0\le i\le K$. The supports with $i\ge1$ are disjoint. The IMS identity gives
$$
\sum_{i=0}^KQ(\chi_i u_j,\chi_i u_j)
=Q(u_j,u_j)+\frac{4(n-1)}{n-2}\int_M\sum_i|\nabla\chi_i|^2u_j^2\dd V_g.
$$
For fixed $r$ the last term vanishes. In a normal-coordinate ball the sharp Euclidean Sobolev inequality \cite{Aubin1976,Talenti1976} is
$$
\frac{4(n-1)}{n-2}\int_M|\nabla(\chi_i u_j)|^2\dd V_g
\ge(1-Cr^2)(ne_n)^{2/n}
\left(\int_M|\chi_i u_j|^{\frac{2n}{n-2}}\dd V_g\right)^{\frac{n-2}{n}}.
$$
The potential terms tend to zero. Since each local critical mass tends to $D/K$, letting first $j\to\infty$ and then $r\to0$ yields
$$
D\ge K(ne_n)^{2/n}(D/K)^{(n-2)/n}.
$$
It follows that $D\ge Kne_n$, so $D=Kne_n$. For each fixed small $r$ and $1\le i\le K$, the local Sobolev bound and the IMS identity give
\begin{align*}
\liminf_{j\to\infty}Q(\chi_i u_j,\chi_i u_j)&\ge ne_n-Cr^2,\\
\limsup_{j\to\infty}Q(\chi_i u_j,\chi_i u_j)&\le ne_n+Cr^2.
\end{align*}
For fixed $K$, the constant $C$ is independent of $r$. The upper bound follows by subtracting the lower bounds for the other $K-1$ localized energies from the total limit $Kne_n$, using $Q(\chi_0u_j,\chi_0u_j)\ge0$. The same identity gives $0\le\limsup_{j\to\infty}Q(\chi_0u_j,\chi_0u_j)\le Cr^2$. After transfer to Euclidean coordinates, the local Sobolev deficits therefore have upper limits bounded by $Cr^2$ as well. Thus these deficits and the remaining localized energy tend to zero when first $j\to\infty$ with $r$ fixed and then $r\downarrow0$. This conclusion uses only the Nehari identity and the exact limiting masses, without a Palais--Smale assumption.

Choose radii $r_j\downarrow0$ by a common diagonal selection for all $K$ centers. More explicitly, for each $\eta>0$, first choose $r$ so that the preceding $Cr^2$ bounds are below $\eta$. For that fixed $r$, take $j$ sufficiently large to control all local mass errors, the IMS error and $r^{-2}\|u_j\|_2^2$ by $\eta$, and all local energy errors by $2\eta$. Letting $\eta\downarrow0$ along a sequence gives radii $r_j$ for which the local masses and gradient energies have the preceding limits, all local Euclidean Sobolev deficits tend to zero, and $r_j^{-2}\|u_j\|_2^2\to0$. In the rescaled coordinate charts the localized functions are therefore Euclidean optimizing sequences. Extend each localized coordinate function by zero to $\R^n$. It belongs to $\dot H^1(\R^n)$, and the metric errors vanish because $r_j\to0$. The theorem of Bianchi and Egnell \cite{BianchiEgnell1991} states that, for every $n\ge3$, the sharp Euclidean Sobolev deficit bounds below a positive dimensional constant times the squared $\dot H^1$ distance to all scalar multiples, translations and dilations of the Euclidean optimizer. Its hypotheses now hold for each of these localized functions. Thus each local function is $o_{\dot H^1}(1)$ from a single positive bubble, with amplitude tending to one. Its scale tends to zero and its center tends to the relevant $P_i^{(j)}$. Its scale divided by the localization radius tends to zero as well; otherwise a definite optimizer tail would remain outside the shrinking localization ball.

Along the chosen diagonal, $Q(\chi_0u_j,\chi_0u_j)\to0$, so coercivity gives $\|\chi_0u_j\|_{H^1(M)}\to0$. Since $0\le1-\sum_{i=1}^K\chi_i\le\chi_0$ and $|\nabla\chi_i|\le C/r_j$, the product rule gives
$$
\left\|u_j-\sum_{i=1}^K\chi_i u_j\right\|_{H^1(M)}
\le C\left(\|\chi_0u_j\|_{H^1(M)}+r_j^{-1}\|u_j\|_{L^2(M)}\right)\longrightarrow0.
$$
Here $C$ is independent of $j$, and the last term tends to zero by the preceding diagonal choice. Denote the center and scale of the $i$th Euclidean optimizing bubble by $a_{i,j}$ and $\delta_{i,j}$ in the normal coordinates at $P_i^{(j)}$, and put $\xi_{i,j}=\exp_{P_i^{(j)}}(a_{i,j})$. Thus $a_{i,j}\to0$ and $\delta_{i,j}\to0$. Identify the tangent spaces with $\R^n$ using an orthonormal frame at $P_i^{(j)}$ and its parallel transport to $\xi_{i,j}$. Smoothness of the exponential maps gives, for every fixed $R<\infty$,
$$
\delta_{i,j}^{-1}\left[\exp_{P_i^{(j)}}^{-1}\!\left(\exp_{\xi_{i,j}}(\delta_{i,j}y)\right)-a_{i,j}\right]
\longrightarrow y\quad\text{in }C^1(B_R(0)).
$$
Consequently the translated Euclidean bubble and the bubble centered at $\xi_{i,j}$ differ by $o(1)$ in gradient norm on each such rescaled ball. On a common fixed coordinate ball, the coordinate changes and their inverses have uniformly bounded differentials. Outside the ball of radius $R\delta_{i,j}$ about $\xi_{i,j}$, the gradient energy of either fixed cut-off representation is therefore bounded by $C\int_{|y|>cR}|\nabla U|^2\dd y+o(1)$, with $c,C>0$ independent of $j$ and $R$; derivatives of the fixed cutoffs contribute $O(\delta_{i,j}^{n-2})$. Letting first $j\to\infty$ and then $R\to\infty$ proves that changing the coordinate center and taking fixed cutoffs supported inside the partition cores changes the approximation by $o_{H^1}(1)$, since both bubble representations have $L^2$ norms tending to zero. Combining this with the local approximations and the preceding estimate gives
\begin{equation}\label{eq:modulated-decomposition}
u_j=\sum_{i=1}^K t_{i,j}U_{\delta_{i,j},\xi_{i,j}}+v_j,
\qquad t_{i,j}\to1,\quad\delta_{i,j}\to0,\quad
\xi_{i,j}-P_i^{(j)}\to0,\quad\|v_j\|_Q\to0.
\end{equation}
Here the center convergence is understood in local coordinates. We next adjust the parameters to make $v_j$ orthogonal to each bubble and its scale and center derivatives. Use amplitude, logarithmic scale, and center displacement divided by the scale as parameter coordinates. Their first and second $H^1$ derivatives are uniformly bounded by rescaling and the cutoff tail estimates. The Gram matrix converges to the Euclidean Gram matrix of
$$
U,\qquad-\frac{n-2}{2}U-x\cdot\nabla U,\qquad\partial_1U,\ldots,\partial_nU.
$$
These functions are independent. The disjoint supports remove cross blocks, while metric and potential terms tend to zero. The Gram inverse is consequently uniformly bounded, even for unequal scales. The derivative of the orthogonality equations differs from this invertible matrix by $O(\|v_j\|_Q)$. The quantitative implicit function theorem gives the required adjustment, preserving all limits and support properties in \eqref{eq:modulated-decomposition}. In the rest of the proof suppress $j$ and use these adjusted parameters.

Since $n\ge6$, Taylor's formula for the power $2n/(n-2)$ has remainder
\begin{equation}\label{eq:power-taylor}
\left|(b+h)_+^{\frac{2n}{n-2}}-b^{\frac{2n}{n-2}}-\frac{2n}{n-2}b^{\frac{n+2}{n-2}}h-\frac{n(n+2)}{(n-2)^2}b^{\frac{4}{n-2}}h^2\right|
\le C|h|^{\frac{2n}{n-2}}\qquad(b\ge0).
\end{equation}
Indeed the second derivative is Hölder with exponent $\frac{4}{n-2}$, including the Lipschitz endpoint $n=6$, and the integral second-order remainder gives the formula. Its first-order version is bounded by $C(b^{\frac{4}{n-2}}h^2+|h|^{\frac{2n}{n-2}})$.

The mass equalities remove the negative amplitude directions. Every adjusted bubble is supported in the core where its own $\theta_i$ is one and the other partition functions vanish. The first-order Taylor estimate, including the region outside all bubble supports, gives
\begin{align*}
\int_M\theta_i u^{\frac{2n}{n-2}}\dd V_g
={}&t_i^{\frac{2n}{n-2}}\int_MU_{\delta_i,\xi_i}^{\frac{2n}{n-2}}\dd V_g\\
&-\frac{2n}{n-2}t_i^{\frac{n+2}{n-2}}\ip{L_\psi U_{\delta_i,\xi_i}-U_{\delta_i,\xi_i}^{\frac{n+2}{n-2}}}{v}
+O(\|v\|_Q^2).
\end{align*}
Indeed $Q(U_{\delta_i,\xi_i},v)=0$ converts the linear mass term to the displayed residual pairing. H\"older's inequality bounds the quadratic mass terms by $C\|v\|_Q^2$; the remaining $\|v\|_{\frac{2n}{n-2}}^{\frac{2n}{n-2}}$ term is absorbed because $\frac{2n}{n-2}>2$ and $v\to0$. All the left-hand sides are equal, while each bubble mass is $ne_n+O(\delta_i^2)$. Since $t_i\to1$, subtracting the identities for $i$ and $l$ yields
\begin{align*}
|t_i-t_l|\le C\biggl(&\sum_j\delta_j^2+\|v\|_Q^2\\
&+\sum_j\|L_\psi U_{\delta_j,\xi_j}-U_{\delta_j,\xi_j}^{\frac{n+2}{n-2}}\|_{H^{-1}}\|v\|_Q\biggr).
\end{align*}
Disjoint supports and the same orthogonality also give
$$
Q(u,u)=\sum_i t_i^2Q(U_{\delta_i,\xi_i},U_{\delta_i,\xi_i})+Q(v,v).
$$
Summing the mass expansions and using $Q(u,u)=\int_Mu^{\frac{2n}{n-2}}\dd V_g$, we obtain
\begin{align*}
\left|ne_n\sum_i(t_i^2-t_i^{\frac{2n}{n-2}})\right|
\le C\biggl(&\sum_i\delta_i^2+\|v\|_Q^2\\
&+\sum_i\|L_\psi U_{\delta_i,\xi_i}-U_{\delta_i,\xi_i}^{\frac{n+2}{n-2}}\|_{H^{-1}}\|v\|_Q\biggr).
\end{align*}
By \eqref{eq:residual-small-square} and Young's inequality both right-hand sides are bounded by $C(\sum_i\delta_i^2+\|v\|_Q^2)$. The pairwise differences control the deviation of each $t_i$ from their mean. Expanding $t^2-t^{\frac{2n}{n-2}}$ at that mean, its derivative tends to $-\frac{4}{n-2}\ne0$; its quadratic error is absorbed because every $t_i\to1$. Consequently
\begin{equation}\label{eq:amplitude-control}
\max_i|t_i-1|\le C\left(\sum_i\delta_i^2+\|v\|_Q^2\right).
\end{equation}

For large $j$, $(\psi-R_g)(\xi_i)\ge h_0/2$. The disjoint supports, \eqref{eq:local-energy} and \eqref{eq:amplitude-control} imply
$$
J\left(\sum_i t_iU_{\delta_i,\xi_i}\right)
\ge Ke_n+c\sum_i\delta_i^2
-C\left[\left(\sum_i\delta_i^2\right)^2+\|v\|_Q^4\right]
+o\left(\sum_i\delta_i^2\right).
$$
The linear amplitude derivative is $O(\delta_i^2)$, and its second derivative is bounded, which accounts for the error shown. Apply \eqref{eq:power-taylor} to add $v$. Orthogonality makes the linear term a sum of residual pairings. Lemma~\ref{lem:normal}, with $t_i\to1$, gives a positive quadratic term. Young's inequality and \eqref{eq:residual-small-square} absorb the residual pairings at a cost $o(\sum_i\delta_i^2)$. We obtain
\begin{align*}
J(u_j)\ge{}&Ke_n+c_1\sum_i\delta_i^2+c_2\|v_j\|_Q^2\\
&-C\left[\left(\sum_i\delta_i^2\right)^2+\|v_j\|_Q^4+\|v_j\|_Q^{\frac{2n}{n-2}}\right]
+o\left(\sum_i\delta_i^2\right).
\end{align*}
Since $\frac{2n}{n-2}>2$, the right side is strictly larger than $Ke_n$ for large $j$. This contradicts the energy bound and proves the proposition.
\end{proof}

\section{Finite cycles of atomic measures}\label{sec:topology}

This section applies to every smooth closed connected $n$-manifold with $n\ge3$. All chains, homology and cohomology groups have coefficients in $\F$. Measures with exactly $K$ distinct support points form an open smooth stratum of $B_K(M)$: locally order the centers in disjoint neighborhoods and use their positions and positive weights as coordinates.

\begin{proposition}\label{prop:source}
There is a continuous map $f:M\to\R^{n-1}$ and a constant $L_0<\infty$ with the following properties. For every $K\ge1$, the compact space
\begin{equation}\label{eq:source-space}
Z_K=\{\nu\in B_K(M):f\hbox{ is constant on }\supp\nu\}
\end{equation}
has a finite singular cycle $z_K$ of dimension $2K+n-2$. On the open stratum with $K$ distinct centers avoiding a fixed $(n-2)$-dimensional skeleton, this cycle has local fundamental coefficient one. The support of every $\nu\in Z_K$ is contained in a rectifiable closed curve of length at most $L_0$, independently of $K$.
\end{proposition}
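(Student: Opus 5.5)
The plan is to take $f$ piecewise linear and generic, so that its fibers are one-dimensional. Then $Z_K$ fibres over $f(M)$, with fiber $B_K(f^{-1}(y))$ over $y$. The cycle $z_K$ is the fiberwise top class of $B_K$ of a circle, spread over the $(n-1)$-dimensional base. The dimension count is $(2K-1)+(n-1)=2K+n-2$.

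\emph{Construction of $f$ and the length bound.} Fix a smooth triangulation of $M$ with $N$ top simplices. Let $f:M\to\R^{n-1}$ be linear on each simplex, with vertex values in general position. For each simplex $\sigma$ and each $y$, the set $f^{-1}(y)\cap\sigma$ is then an affine segment, a point, or empty. The rectifiability of the support of $\nu$ needs no regularity of $y$. Indeed, every fiber is a finite union of at most $N$ segments, each of length at most $\max_\sigma\diam\sigma$. Join its at most $N$ components by geodesic arcs of length at most $\diam M$, and traverse the resulting connected graph as a closed curve, doubling edges. This gives a rectifiable closed curve of length at most
$$
L_0=2N\bigl(\max_\sigma\diam\sigma+\diam M\bigr)
$$
containing $f^{-1}(y)$, hence the support of every $\nu\in Z_K$. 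The bound is independent of $K$. Let $S$ be the $(n-2)$-skeleton of the triangulation. For $y$ off $f(S)$, the fiber $f^{-1}(y)$ is a disjoint union of PL circles, namely a mod two $1$-cycle $c_y$. Compactness of $Z_K$ is immediate, since the condition ``$f$ constant on $\supp\nu$'' is closed in the compact space $B_K(M)$.

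\emph{Construction of $z_K$.} I would use the parametrization behind Bahri--Coron's chains, arranged so that each generic measure is covered exactly once; this avoids the $K!$ overcount that mod two would kill. The total space $\{(y,x):f(x)=y\}$ is $M$ itself. Subdivide the triangulation so that $f$ restricted to it is a PL ``fibration'' over cells of a triangulation $T$ of $f(M)$. Over each top cell $\tau$ of $T$, the fibers are then canonically identified with a fixed collection of PL circles.

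On each circle, a measure with $K$ distinct points is described by cyclically ordered positions and weights. Build the cellular chain of $B_K(c)$ from the cells
$$
\{x_1<\dots<x_K\ \text{in edge-order}\}\times\Delta^{K-1}.
$$
The cyclic ambiguity is removed by recording the points in a fixed order along the edges of the fiber graph. Each such cell embeds, so every measure with $K$ distinct points in the open stratum is hit with coefficient one.

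The chain $z_K$ is then the sum, over top cells $\tau$ of $T$, of the products of $\tau$ with these fiberwise cells, pushed forward into $Z_K$.

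\emph{Checking $\partial z_K=0$ mod two.} There are three kinds of codimension-one faces:
\begin{itemize}[label=--]
\item weight faces $w_i=0$, whose image factors through a lower-dimensional parameter space and vanishes modulo degenerate chains;
\item collision faces $x_i=x_{i+1}$, which are glued pairwise to the adjacent cell in which the two points are merged with summed weights, or again factor through lower dimension;
\item base faces over interior walls of $T$, which cancel in pairs, together with faces over $\partial f(M)$ and over $f(S)$.
\end{itemize}

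The main obstacle is this last point. Across $f(S)$ the fiber topology changes: circles are born, die, or undergo saddle surgeries. One must check that the fiberwise chains over the two sides of a wall differ by a boundary, or are equal mod two after the surgery. For a saddle, I would verify this using that the two resolutions of the $X$-shaped singular fiber define the same mod two $1$-chain on the graph; the fiberwise product chain is then literally the same chain on the singular fiber. Births and deaths contribute chains supported on a point-sized fiber, which are degenerate. Since $M$ is closed, $f^{-1}(\partial f(M))$ consists of finitely many points and segments, giving degenerate contributions as well.

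Once these cancellations are verified, the local coefficient on the open stratum avoiding $S$ is one by the injectivity of the parametrization. That is the stated property of $z_K$.
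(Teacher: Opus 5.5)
Your choice of $f$, the compactness of $Z_K$ and the length bound $L_0$ agree with the paper. The gap is in the proof that $z_K$ is a cycle: your cancellations are statements about supports, not about singular chains.
\begin{itemize}
\item The weight and collision faces are singular simplices with lower-dimensional image. Such simplices are not zero in the singular chain complex.
\item Across a wall in $f(S)$, the two adjacent product chains restrict to different singular chains, even though both cover the part of $Z_K$ over the wall once. At a four-valent point with rays $r_1,\dots,r_4$, one side orders atoms along the degenerate arcs $r_1\cup r_2$ and $r_3\cup r_4$, the other along $r_2\cup r_3$ and $r_4\cup r_1$. The two families of cells agree only after subdivision, so the claim that the product chain is ``literally the same chain on the singular fiber'' fails.
\item The wall fibers can also have vertices of any even valence $2k$, not only X-shaped ones.
\end{itemize}
To turn ``vanishes modulo degenerate chains'' and ``equal mod two after the surgery'' into $\partial z_K=0$, you need one of two things. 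Either use a chain model in which a chain is determined by its support modulo lower-dimensional sets, or fill the degenerate boundary inside a set whose homology vanishes above its dimension. Either way you need a triangulation of $Z_K$, which your plan never provides.

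The paper supplies exactly this, and in doing so makes the wall analysis unnecessary. The key observation you are missing is local:
\begin{itemize}
\item A $K$-point measure whose atoms all avoid $S$ is a manifold point of $Z_K$, even when its fiber passes through $S$ elsewhere.
\item Near each atom $f$ is a product projection, which gives charts $(y,t_1,\dots,t_K,w)$ of dimension $2K+n-2$.
\item Births, deaths and saddles elsewhere on the fiber therefore play no role.
\end{itemize}
The only exceptional set is the closed set $D_K$ of measures with at most $K-1$ atoms or with an atom on $S$. A parameter count gives $\dim D_K\le 2K+n-4$. The moments through degree $2K$ embed $B_K$ of the polyhedron, so $Z_K$ is compact semialgebraic and has a finite triangulation compatible with $D_K$. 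Every $(2K+n-3)$-simplex then has its interior in the manifold part, so it is a face of exactly two top simplices. The sum of all top simplices is therefore a mod-two cycle with local coefficient one, with no case analysis of the fiber topology.
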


\begin{proof}
Take a finite smooth compatible triangulation $\tau:|T|\to M$: each closed simplex is smoothly embedded with injective differential, and $|T|$ is a piecewise linear manifold. We use the compatible triangulation theorem in this precise form; a proof and its relation to Whitehead's theorem are given in \cite{Lurie2009}. Assign vectors of $\R^{n-1}$ to the finitely many vertices so that every $(n-1)$-simplex has affinely independent image vertices. These are finitely many nonvanishing polynomial conditions. Each determinant polynomial is nonzero, so their product is nonzero and such an assignment exists. Affine extension on simplices gives $F:|T|\to\R^{n-1}$; set $f=F\circ\tau^{-1}$.

The restriction to a $k$-face has rank $k$ when $k\le n-1$, and the restriction to an $n$-simplex has rank $n-1$. A fiber in an $n$-simplex is a line segment, a point or the empty set. Put $S=\tau(|T^{(n-2)}|)\subset M$. Off $S$, the map is locally a product projection. Inside an $n$-simplex this follows by adding a complementary affine coordinate. Across an $(n-1)$-face choose tangential coordinates $x'$ and signed normal coordinate $t$. The two affine expressions are
$$
F(x',t)=Ax'+b_+t+c\quad(t\ge0),\qquad
F(x',t)=Ax'+b_-t+c\quad(t\le0),
$$
with the same invertible matrix $A$. The map $(x',t)\mapsto(F(x',t),t)$ has a piecewise affine inverse, so its fiber is locally a full interval rather than an interval with boundary.

The condition in \eqref{eq:source-space} is closed, since it is equivalent to
$$
\int_M|f|^2\dd\nu-\left|\int_Mf\dd\nu\right|^2=0.
$$
Hence $Z_K$ is compact. To obtain a finite triangulation, work first on a Euclidean realization of $|T|$. The moments of a measure through degree $2K$ embed $B_K(|T|)$ into a finite-dimensional Euclidean space. For injectivity, if a measure is supported at $x_1,\ldots,x_m$, $m\le K$, the nonnegative polynomial $\prod_i|x-x_i|^2$ forces every measure with the same moments to have support in this set. The polynomials $\prod_{j\ne i}|x-x_j|^2$ then determine each weight. Compactness makes the injection an embedding. The moment image, the common-fiber condition and all specified face and support conditions are semialgebraic: each is the projection of finitely many polynomial conditions on centers and weights. The finite compatible semialgebraic triangulation theorem, in the form \cite[Definition 3.11 and Theorem 3.12]{Czapla2012}, therefore applies. Push this triangulation forward by $\tau$ on measures.

Let $D_K\subset Z_K$ consist of measures with at most $K-1$ support points together with $K$-point measures having a center in $S$. This set is closed: if a marked skeleton center loses its positive weight, the limit belongs to the lower-support part. Outside $D_K$, order the centers in disjoint local neighborhoods. The product coordinates for $f$ give a common variable $y\in\R^{n-1}$, one fiber coordinate for each center, and $K-1$ positive weight coordinates. Thus $Z_K\setminus D_K$ is a manifold without boundary of dimension $2K+n-2$; the unordered description gives the same local charts because the centers are distinct.

We check all exceptional dimensions. If $m$ distinct centers lie in prescribed relative open faces of dimensions $k_1,\ldots,k_m$, and their affine image intersection has dimension $b$, the center-parameter dimension is at most
$$
b+\sum_i\max\{k_i-(n-1),0\}.
$$
There are another $m-1$ weight parameters. For $m\le K-1$, this gives at most $(n-1)+m+(m-1)\le 2K+n-4$. For $m=K$ with at least one center on the $(n-2)$ skeleton, one has $b\le n-2$ and at most $K-1$ fiber directions, again giving at most $2K+n-4$. Multiple skeleton centers, collisions and weight losses satisfy these same inequalities. Thus $\dim D_K\le 2K+n-4$.

Choose the finite triangulation compatible with $D_K$, order its vertices, and sum all its simplices of dimension $2K+n-2$ with coefficient one. The relative interior of any incident $(2K+n-3)$-face is outside $D_K$. It is a manifold point and meets exactly two top-dimensional simplices. Their boundaries cancel modulo two, giving the finite closed singular chain $z_K$. At every point of the manifold stratum its local fundamental coefficient is one. Lower-dimensional components do not create a top-dimensional boundary.

Finally, in each original $n$-simplex the fiber segment has length bounded by a fixed Lipschitz constant for $\tau$ times that simplex's diameter. Summing over the finitely many simplices bounds the total fiber length independently of the fiber and of $K$. Its number of components is also uniformly finite. Join these components by finitely many minimizing geodesics, double the edges of the resulting finite abstract graph, and traverse them. This produces a closed rectifiable curve containing the entire fiber with a uniform length bound $L_0$. No continuous choice of this auxiliary curve is required.
\end{proof}

\begin{proposition}\label{prop:tree}
Let $K=2^r$, $r\ge1$, and let $O\subset M$ be nonempty and open. Inside the open $K$-point stratum of $B_K(M)$ there is a compact smooth embedded manifold $C_K$, consisting of equal-weight measures supported in $O$, such that
$$
\dim C_K=(n-1)(K-1),\qquad\operatorname{codim} C_K=2K+n-2.
$$
Its centers are uniformly separated for this fixed $K$. Its mod-two normal Thom class $\tau_C$, regarded by excision as a class of $H^{2K+n-2}(B_K(M),B_K(M)\setminus C_K)$, satisfies
\begin{equation}\label{eq:source-pairing}
\ip{\overline{\tau_C}}{[z_K]}=1,
\end{equation}
where the bar denotes passage to absolute cohomology.
\end{proposition}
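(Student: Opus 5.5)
We construct $C_K$ so that it meets the cycle $z_K$ of Proposition~\ref{prop:source} transversally in exactly one point of the open manifold stratum $Z_K\setminus D_K$. The pairing \eqref{eq:source-pairing} then equals the mod-two count of transverse intersection points, because $z_K$ has local fundamental coefficient one there. The dimension count is consistent: the $K$-point stratum of $B_K(M)$ has dimension $nK+K-1$. The cycle $z_K$ has dimension $2K+n-2$, and $C_K$ has dimension $(n-1)(K-1)$. Hence
$$
\dim C_K+\dim z_K=(n-1)(K-1)+2K+n-2=nK+K-1,
$$
so a transverse intersection is isolated.

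\textbf{Construction.} First choose a point $P\in O\setminus S$ where $f$ is a local product projection. Choose coordinates near $P$ of the form $(y,s)\in\R^{n-1}\times\R$ with $f=y$. The cycle $Z_K$ near a $K$-point measure centered near $P$ then consists of the following data: a common $y$, $K$ fiber coordinates $s_1,\dots,s_K$, and $K-1$ weights.

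$C_K$ must cut out the complementary directions. These are the $K-1$ independent $y$-displacements between centers, of total dimension $(n-1)(K-1)$, together with nothing else. Since the weights are equal on $C_K$, the equal-weight condition supplies $K-1$ further normal directions. The fiber positions supply $K$ normal directions and the common $y$ supplies $n-1$. This matches $\operatorname{codim}C_K=2K+n-2$.

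The naive choice $\{y_i=y_1+w_i,\ s_i=\sigma_i\}$ with the $w_i$ free is not compact. We therefore use the dyadic structure $K=2^r$ and build $C_K$ recursively as an iterated sphere bundle, the ``tree''. At level one, take two centers at $P\pm\epsilon_1\omega$ with $\omega\in S^{n-2}$, displaced only in the $y$-directions and at distinct fixed fiber heights. At level $j$, each existing center is replaced by a pair displaced by $\pm\epsilon_j\omega'$, with $\epsilon_j\ll\epsilon_{j-1}$ and a new sphere parameter $\omega'\in S^{n-2}$. Each parameter is a sphere of dimension $n-2$, one for each of the $K-1$ internal nodes of the binary tree. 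The fiber heights are fixed and distinct for distinct leaves, which keeps the centers uniformly separated. This gives dimension $(K-1)(n-2)$, short of the required $(n-1)(K-1)$ by $K-1$.

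To remove the discrepancy, let the tree's sphere parameters range over $S^{n-1}$ in $(y,s)$-space rather than over $S^{n-2}$. Alternatively, add one radial or fiber parameter per node on a circle. In every version, the product of the node spheres is a compact embedded manifold inside $O$, consisting of equal-weight measures with separated centers, of dimension $(n-1)(K-1)$.

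\textbf{Counting intersections.} For a point of $C_K$ to lie in $Z_K$, all centers must have the same $y$. At the top node, the two halves must share $y$. With a sphere of radius $\epsilon_1$ in $(y,s)$ this forces $\omega$ to be purely fiber-directed, that is, $\omega=\pm e_s$, giving two candidate solutions. Recursing through the tree produces $2^{K-1}$ solutions. To obtain an odd count, break the symmetry: use a hemisphere, or make the fiber heights incompatible with one of the signs, for example by requiring the pair displacement to be ordered relative to the fixed heights. Then only one solution survives, namely the configuration in which all centers are stacked on a single fiber segment through $P$. Transversality is checked at that point by linearizing: the $y$-equalities have surjective differential in the sphere directions, while the weights and the common $y$ are complementary to $T C_K$.

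\textbf{Main obstacle.} The delicate step is arranging a compact closed $C_K$ whose intersection with the fiber-constrained locus is a single transverse point. Compactness forces sphere-type parameters, and sphere parameters naturally give an even number of solutions. I would resolve this by letting each node parameter trace a closed curve transverse to $f$'s fiber direction, chosen so that it meets the fiber exactly once mod two, and then appealing to excision to identify $\tau_C$ with the local Thom class.
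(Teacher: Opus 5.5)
There is a genuine gap, at exactly the step you flag as the main obstacle. Your dimension count, the node directions $v_I\in S^{n-1}$ in $(y,s)$-space, and the transversality check at the stacked configuration agree with the paper. The problem is where you count. Your $2^{K-1}$ solutions $v_I=\pm e_s$ live in the parameter space $(S^{n-1})^{K-1}$, whereas $C_K$ and $z_K$ live in $B_K(M)$, whose points are unordered measures.

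If the leaves carry no extra labels, then reversing $v_I$ while exchanging the descendant directions of its two subtrees gives the same equal-weight measure. So your claim that the product of node spheres is embedded is false.

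If you keep the fixed distinct fiber heights, the children are distinguished and the count really is even. It stays even for every closed labelled choice:
\begin{itemize}
\item By the balanced tree and vertical heights, the common-fiber condition forces the $y$-part of each node displacement $D_I:N_I\to\R^n$ to vanish.
\item The count is therefore a product of mod-two intersection numbers of the closed $(n-1)$-cycles $D_I(N_I)$ with the $s$-axis.
\item Each factor is zero because $H_{n-1}(\R^n;\F)=0$.
\end{itemize}
This disposes of the closed-curve idea, which also has dimension $1$ instead of $n-1$ per node. The circle variant keeps a nonvanishing $y$-displacement and so misses $Z_K$ entirely. The hemisphere, or an ordering condition relative to the heights, makes $C_K$ a manifold with boundary. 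Then duality in the open stratum gives $H^{2K+n-2}(B_K(M),B_K(M)\setminus C_K;\F)\cong H_{(n-1)(K-1)}(C_K;\F)=0$. There is no Thom class at all, and $z_K$ can be pushed off $C_K$ across the boundary.

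The missing idea is to divide by the symmetry instead of breaking it. The paper uses no fixed heights. It puts the leaves at $x_\eta=\sum_I\sigma_{I,\eta}a_{\operatorname{depth}I}v_I$ with $\sum_{l>j}a_l<a_j/10$, and this radius hierarchy supplies the separation. The child-exchange group, of order $2^{K-1}$, acts freely on $(S^{n-1})^{K-1}$: at a highest exchanged vertex a fixed point would need $v_I=-v_I$.

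Because $K=2^r$ and the tree is balanced, descendant contributions have mean zero in each child cluster. Hence the difference of the two child-cluster means is exactly $2a_{\operatorname{depth}I}v_I$. Together with the distance thresholds that identify the clusters, this recovers $v_I$ up to exchange. So the quotient maps by an injective immersion, hence an embedding, onto $C_K$, which still has dimension $(n-1)(K-1)$.

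The same mean-zero property shows that a common fiber forces all leaf $y$-coordinates to vanish and every $v_I=\pm e_s$. These $2^{K-1}$ choices form one orbit, that is, a single point of $C_K$ in the smooth stratum of $z_K$. For $K=2$ this is just $C_2\cong\mathbb{RP}^{n-1}$ meeting $Z_2$ only at the equal-weight measure on $P_0\pm a e_s$. Your transversality check at that one point then gives the pairing $1$.
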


\begin{proof}
Choose a chart in $O$ inside an original $n$-simplex, with coordinates $(y,t)$ such that $y=f-f(P_0)\in\R^{n-1}$. Use the complete rooted binary tree of depth $r$, with $K=2^r$ leaves. Thus the two child clusters at every internal vertex have the same number of leaves. For each internal vertex $I$ choose $v_I\in S^{n-1}\subset\R^n$. Choose positive radii $a_j$, indexed by depth, so small that all the following points lie in the chart and
$$
\sum_{l>j}a_l<a_j/10.
$$
For a leaf $\eta$ put
\begin{equation}\label{eq:tree-leaf}
x_\eta=\sum_{I\text{ ancestor of }\eta}
\sigma_{I,\eta}a_{\operatorname{depth}I}v_I,
\end{equation}
where $\sigma_{I,\eta}$ is $+1$ or $-1$ according to its child branch. Take the equal-weight measure of these chart points. If two leaves first differ at depth $j$, their distance in coordinates is at least $2a_j-2\sum_{l>j}a_l>0$.

The group of binary child exchanges has order $2^{K-1}$. It acts on $(S^{n-1})^{K-1}$ by reversing the direction at an exchanged vertex and permuting its descendant directions. This action is free: at a highest exchanged vertex no ancestor exchange occurs, and a fixed point would require $v_I=-v_I$. Its quotient is a compact smooth manifold of dimension $(n-1)(K-1)$.

The map from this quotient to unordered configurations is an embedding. The radius inequalities distinguish the two root clusters by a distance threshold and then distinguish the two clusters at every subsequent vertex. Within each child cluster every descendant contribution has arithmetic mean zero: at each descendant vertex its two signs occur equally often. The difference of the two child-cluster means is therefore exactly $2a_{\operatorname{depth}I}v_I$. It recovers $v_I$ up to child exchange. With local labels fixed, this recovery is smooth and provides a local inverse. Thus the configuration map is an injective immersion, and compactness makes it an embedding. Denote its image in the open $K$-point stratum by $C_K$. That stratum has dimension $nK+(K-1)=(n+1)K-1$, so the stated codimension follows.

The following intersection calculation is in the unordered configuration space, so no permutation multiplicity is introduced. The intersection of $C_K$ and $Z_K$ consists of one point. Indeed the common-fiber condition makes all leaf $y$ coordinates equal. Their mean is zero by \eqref{eq:tree-leaf}, so they all vanish. Differences of child-cluster means then force every $v_I$ to be one of the two unit vectors on the $t$ axis. Conversely those choices satisfy the condition. All $2^{K-1}$ sign choices form one orbit of the child-exchange group.

This intersection is transverse. At these axis directions the tangent variables are the first $n-1$ components of the $v_I$. Their map to centered leaf arrays $(\delta y_\eta)$ is a linear isomorphism: for each vertex take the difference of the two child-cluster means and divide by $2a_{\operatorname{depth}I}$ to recover its tangent variable. The common-fiber equations have exactly these $(n-1)(K-1)$ independent center components; all weight directions belong to the source tangent space. The single intersection is in the smooth source stratum, where $z_K$ has local coefficient one.

The compact embedded $C_K$ has a tubular neighborhood entirely in the open $K$-point stratum. Its normal bundle has rank $2K+n-2$ and a mod-two Thom class, independently of orientations of the manifold or bundle. Excision gives the indicated class on $B_K(M)$, and transverse local intersection with the finite cycle computes its pairing as one. This proves \eqref{eq:source-pairing}.
\end{proof}

\section{Positive test functions and their energy homotopy}\label{sec:tests}

In this section $n\ge6$. Fix a cutoff as in \eqref{eq:cutoff-bubble}, equal to one for $r\le r_0$ and zero for $r\ge2r_0$, with $2r_0$ below the injectivity radius. Set
\begin{equation}\label{eq:projected-bubble}
\phi_{\delta,P}=L_\psi^{-1}(U_{\delta,P}^{\frac{n+2}{n-2}}).
\end{equation}
These functions are smooth and strictly positive. For $\nu\in\Prob(M)$ and $\delta>0$, define
\begin{equation}\label{eq:test-map}
G_\delta(\nu)=\Pi\left[\left(\int_M\phi_{\delta,P}^{\frac{2n}{n-2}}\dd\nu(P)\right)^{\frac{n-2}{2n}}\right].
\end{equation}
In the integral $P$ is the source variable and the resulting function depends on $x\in M$. The definition depends on the measure itself, so it remains continuous when atoms merge or a weight tends to zero. For an atomic measure the critical density before normalization is exactly the weighted sum of the kernels $\phi_{\delta,P}^{\frac{2n}{n-2}}$. This identity will control both the energy and the probability measure of the test function.

\begin{lemma}\label{lem:green-bubbles}
There are positive constants $c,C$ such that, uniformly in $P,x$ and small $\delta$,
\begin{equation}\label{eq:projected-comparison}
c\frac{\delta^{(n-2)/2}}{(\delta^2+d_g(P,x)^2)^{(n-2)/2}}
\le\phi_{\delta,P}(x)
\le C\frac{\delta^{(n-2)/2}}{(\delta^2+d_g(P,x)^2)^{(n-2)/2}}.
\end{equation}
On the core, the ratio of $\phi_{\delta,P}$ to the uncut bubble differs from one by at most $C(\delta^2+d_g(P,x)^2)$. Moreover,
\begin{equation}\label{eq:kernel-masses}
\begin{split}
\int_M\left(\frac{U_{\delta,P}^{\frac{n+2}{n-2}}}{\phi_{\delta,P}}\right)^{n/2}\dd V_g
&=ne_n+O(\delta^2),\\
\int_M\phi_{\delta,P}^{\frac{2n}{n-2}}\dd V_g&=ne_n+O(\delta^2),\\
\int_Md_g(P,x)\phi_{\delta,P}(x)^{\frac{2n}{n-2}}\dd V_g(x)&\le C\delta.
\end{split}
\end{equation}
\end{lemma}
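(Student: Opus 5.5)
The proof compares $\phi_{\delta,P}$ with the uncut bubble $W_{\delta,P}(x)=\delta^{(n-2)/2}(\delta^2+d_g(P,x)^2)^{-(n-2)/2}$, normalized by the constant $[4n(n-1)]^{(n-2)/4}$. The starting point is the Green representation
$$
\phi_{\delta,P}(x)=\int_MG_\psi(x,y)U_{\delta,P}(y)^{\frac{n+2}{n-2}}\dd V_g(y).
$$
Here $G_\psi$ is the Green function of the coercive operator $L_\psi$. By coercivity and the maximum principle (as in Lemma~\ref{lem:flow}), $G_\psi$ is positive and satisfies
$$
c\,d_g(x,y)^{2-n}\le G_\psi(x,y)\le C\,d_g(x,y)^{2-n}
$$
uniformly on the compact manifold. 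This follows from the standard parametrix construction together with strict positivity of $G_\psi$ off the diagonal.

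The two-sided bound \eqref{eq:projected-comparison} then reduces to the convolution estimate
$$
\int_{M} d_g(x,y)^{2-n}\,\delta^{\frac{n+2}{2}}(\delta^2+d_g(P,y)^2)^{-\frac{n+2}{2}}\dd V_g(y)\asymp \delta^{\frac{n-2}{2}}(\delta^2+d_g(P,x)^2)^{-\frac{n-2}{2}}.
$$
We prove it by rescaling $y=\exp_P(\delta z)$ and splitting the integral into the regions $|z|\lesssim 1$ and $|z|\sim|x|/\delta$, plus the far region. For the lower bound it suffices to integrate over the ball $d_g(P,y)<\delta$, where the cutoff equals one.

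For the finer ratio statement, set $w=\phi_{\delta,P}-U_{\delta,P}$. Then
$$
L_\psi w=U_{\delta,P}^{\frac{n+2}{n-2}}-L_\psi U_{\delta,P}=:-E.
$$
The computation in the proof of Lemma~\ref{lem:local-expansion} shows that in the core $|E|\le C(\psi+|R_g|+1)U_{\delta,P}\cdot O(1)$. More precisely, using $\Delta_g r=(n-1)/r+O(r)$ and the exact Euclidean equation, we get $|E|\le C\delta^{(n-2)/2}(\delta^2+r^2)^{-(n-2)/2}$ in the core, and $|E|$ is $O(\delta^{(n-2)/2})$ on the annulus. Applying the same Green bound gives
$$
|w|\le C\int d^{2-n}(x,y)W_{\delta,P}(y)\dd V_g(y)\le C\,\delta^{(n-2)/2}(\delta^2+r^2)^{-(n-4)/2}
$$
for $n\ge6$; in dimension six there is possibly a logarithm, which is harmless after an extra integration. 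The intended way to get the sharp $C(\delta^2+r^2)$ factor without the logarithm is to iterate once: the leading part of $E$ is $\psi(P)U$ plus a curvature term of size $rU'$, and both terms can be handled by an explicit radial corrector. Dividing by $W_{\delta,P}$ gives $|\phi/W-1|\le C(\delta^2+r^2)$. This sharpness in dimension six is the step I expect to be the main obstacle.

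The mass identities follow by inserting the ratio estimate. In the core we have
$$
\frac{U^{\frac{n+2}{n-2}}}{\phi}=U^{\frac{4}{n-2}}\bigl(1+O(\delta^2+r^2)\bigr),
\qquad
\phi^{\frac{2n}{n-2}}=U^{\frac{2n}{n-2}}\bigl(1+O(\delta^2+r^2)\bigr).
$$
Integrating against the Riemannian volume $1+O(r^2)$, the errors are bounded by $\int(\delta^2+r^2)U^{\frac{2n}{n-2}}=O(\delta^2)$, since the second moment of $U^{2n/(n-2)}$ is finite for all $n\ge3$. The first integrand vanishes outside the cutoff support. For the second, the annulus and its complement contribute $O(\delta^n)$ by \eqref{eq:projected-comparison}. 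Finally, the first-moment bound comes directly from the upper bound in \eqref{eq:projected-comparison}:
$$
\int_M d\,\phi^{\frac{2n}{n-2}}\le C\int d\,W_{\delta,P}^{\frac{2n}{n-2}}\le C\delta\int_{\R^n}|z|(1+|z|^2)^{-n}\dd z=O(\delta).
$$
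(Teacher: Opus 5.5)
Your argument follows the paper's proof step for step. Both use two-sided Green bounds from a parametrix and the maximum principle, a region-splitting convolution estimate for \eqref{eq:projected-comparison}, and the Green representation of $\phi_{\delta,P}-U_{\delta,P}$ with the residual bounded pointwise by a multiple of the uncut bubble. The mass identities are then read off from the core ratio.

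What needs correcting is the dimension-six obstacle you anticipate, because it does not exist. Take $r=d_g(P,x)\ge2\delta$ and split $\int_Md_g(x,y)^{2-n}\delta^{(n-2)/2}(\delta^2+d_g(P,y)^2)^{-(n-2)/2}\dd V_g(y)$ into three regions. The balls $B(P,r/2)$ and $B(x,r/2)$ each contribute at most $C\delta^{(n-2)/2}r^{4-n}$. The remaining region contributes at most $C\delta^{(n-2)/2}\int_{r/2}^{\diam M}\rho^{3-n}\dd\rho\le C\delta^{(n-2)/2}r^{4-n}$, since $n>4$. For $r\le2\delta$ the same splitting gives $C\delta^{(6-n)/2}$.

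So your displayed bound $|w|\le C\delta^{(n-2)/2}(\delta^2+r^2)^{-(n-4)/2}$ holds with no logarithm for every $n\ge5$. Dividing by the bubble gives the sharp factor $C(\delta^2+r^2)$ directly, exactly as in the paper.

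The logarithm you recall from Lemma~\ref{lem:local-expansion} at $n=6$ has a different source. It comes from the $L^{2n/(n+2)}$ norm, hence the $H^{-1}$ bound, of a function of size $U_{\delta,P}$, and that norm diverges logarithmically at $n=6$. It does not come from the pointwise Newtonian potential of that function. You should therefore drop the proposed radial-corrector iteration; as sketched it would not constitute a proof anyway.

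There is also a minor omission. The first integrand in \eqref{eq:kernel-masses} does not vanish on the cutoff annulus where $0<\chi<1$, and the core ratio estimate does not apply there. It is nevertheless $O(\delta^n)$ on that annulus, because $U_{\delta,P}$ is dominated by the uncut bubble and the lower bound in \eqref{eq:projected-comparison} holds.
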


\begin{proof}
The coercive inverse has a Green kernel $G_{L_\psi}(x,y)$. The needed local asymptotic can be obtained as follows. Apply $L_\psi$ to the cut-off kernel $[4(n-1)|S^{n-1}|]^{-1}d_g(x,y)^{2-n}$. Its error away from the Dirac mass is $O(d_g(x,y)^{2-n})$, by the radial normal-coordinate expansion. Successive convolution corrections improve the singular order by two: splitting each convolution into balls about its two poles and the complement gives the bounds $O(d_g(x,y)^{4-n})$, then $O(d_g(x,y)^{6-n})$, with a logarithm at a zero exponent. After finitely many such corrections the remaining kernel is continuous; taking further corrections makes it H\"older continuous. Apply the coercive inverse to that remainder. Elliptic estimates make this last correction bounded. This constructs the inverse kernel and proves its leading fundamental-solution asymptotic. Positivity follows by approximating a nonnegative Dirac mass and using the weak maximum principle from Lemma~\ref{lem:flow}; the strong maximum principle gives strict positivity off the diagonal. Compactness away from the diagonal then gives
$$
cd_g(x,y)^{2-n}\le G_{L_\psi}(x,y)\le Cd_g(x,y)^{2-n}.
$$
These statements concern the fixed smooth uniformly elliptic operator $L_\psi$; the constants may depend on its coefficients.

Let $r=d_g(P,x)$. The source in \eqref{eq:projected-bubble} has mass comparable to $\delta^{(n-2)/2}$ in $B(P,\delta/2)$. The Green lower bound, integrated on a smaller such ball if $r\le2\delta$, gives the lower estimate in \eqref{eq:projected-comparison}. For the upper estimate when $r\le2\delta$, rescale the core and sum radial annuli; the convolution of the source decay with $|x-y|^{2-n}$ is $O(\delta^{-(n-2)/2})$. When $r\ge2\delta$, split the integral into $B(P,r/2)$, $B(x,r/2)$ and their complement. The first part is at most $Cr^{2-n}\delta^{(n-2)/2}$. On the second, the source is bounded by $C\delta^{(n+2)/2}r^{-n-2}$ and the kernel integral by $Cr^2$. On the complement the kernel is at most $Cr^{2-n}$ and the total source mass is $O(\delta^{(n-2)/2})$. This proves the upper estimate.

The residual in Lemma~\ref{lem:local-expansion} is pointwise bounded by a constant times the uncut bubble, including on the fixed annulus. The Green representation of the difference therefore gives
\begin{equation}\label{eq:projection-difference}
|U_{\delta,P}(x)-\phi_{\delta,P}(x)|
\le C\frac{\delta^{(n-2)/2}}{(\delta^2+r^2)^{(n-4)/2}}.
\end{equation}
Here is the needed convolution estimate. For $r\le2\delta$, the core and the annuli are bounded by a constant times
$$
\delta^{(6-n)/2}
+\delta^{(n-2)/2}\int_\delta^{\diam M}\rho^{3-n}\dd\rho
\le C\delta^{(6-n)/2}.
$$
For $r\ge2\delta$, the same three-region split bounds the first two contributions by $C\delta^{(n-2)/2}r^{4-n}$. The outer radial annuli contribute at most $C\delta^{(n-2)/2}\int_r^{\diam M}\rho^{3-n}\dd\rho$, which has the same bound because $n>4$. This proves \eqref{eq:projection-difference} and the asserted core ratio estimate.

The ratio is bounded above and away from zero by \eqref{eq:projected-comparison}. Thus any of the fixed powers in \eqref{eq:kernel-masses} has a Lipschitz dependence on that ratio. In the core its difference from the uncut critical density is bounded by $C(\delta^2+r^2)$ times that density. Its integrated error is $O(\delta^2)$. Outside the core, \eqref{eq:projected-comparison} bounds the relevant densities by $C\delta^n r^{-2n}$; their total mass is $O(\delta^n)$. Normal-coordinate integration proves the first two formulas in \eqref{eq:kernel-masses}. The same pointwise bound and the finite Euclidean first moment of $U^{\frac{2n}{n-2}}$ give the last formula.
\end{proof}

\begin{lemma}\label{lem:envelope}
If $\nu=\sum_{i=1}^m\alpha_i\delta_{P_i}$ has distinct support points and positive weights, then
\begin{equation}\label{eq:envelope}
J(G_\delta(\nu))\le\frac1n\int_M\max_{1\le i\le m}
\left(\frac{U_{\delta,P_i}^{\frac{n+2}{n-2}}}{\phi_{\delta,P_i}}\right)^{n/2}\dd V_g.
\end{equation}
For every collection of disjoint index pairs with $d_g(P_i,P_j)+\delta$ sufficiently small compared with $r_0$, the right side is at most
\begin{equation}\label{eq:overlap-energy}
me_n+Cm\delta^2-c\sum_{(i,j)}
\left(\frac{\delta}{\delta+d_g(P_i,P_j)}\right)^n.
\end{equation}
The constants are independent of $m$, the weights, the centers and the chosen disjoint pairs.
\end{lemma}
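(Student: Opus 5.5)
The plan for \eqref{eq:envelope} is a convexity (Kato-type) inequality for the $\ell^p$ combination in \eqref{eq:test-map}, followed by Hölder. The plan for \eqref{eq:overlap-energy} is the elementary bound $\max\le\sum-\sum_{\text{pairs}}\min$ combined with Lemma~\ref{lem:green-bubbles}.

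\emph{Step 1: the envelope bound.} Write $p=\frac{2n}{n-2}$, $\phi_i=\phi_{\delta,P_i}$ and $f_i=U_{\delta,P_i}^{\frac{n+2}{n-2}}=L_\psi\phi_i$. Put $w=N(\alpha_1^{1/p}\phi_1,\ldots,\alpha_m^{1/p}\phi_m)$, where $N(s)=(\sum_i s_i^p)^{1/p}$ on the positive cone. Since $\Pi$ only rescales, \eqref{eq:ray-energy} gives
$$J(G_\delta(\nu))=\frac{Q(w,w)^{n/2}}{n(\int_Mw^p\dd V_g)^{(n-2)/2}}.$$
The function $N$ is smooth on the open positive cone, convex, and $1$-homogeneous. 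All $\phi_i$ are smooth and strictly positive, so $w$ is smooth. Write $s_i=\alpha_i^{1/p}\phi_i$ and $N_i=\partial_iN(s)=s_i^{p-1}w^{1-p}$.

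The chain rule gives $\nabla w=\sum_iN_i\nabla s_i$, and Euler's identity gives $w=\sum_iN_is_i$. Then
$$\sum_i\ip{\nabla s_i}{\nabla(N_iw)}=|\nabla w|^2+w\sum_{i,j}\partial_{ij}N\,\ip{\nabla s_i}{\nabla s_j}\ge|\nabla w|^2,$$
by positive semidefiniteness of the Hessian of $N$. The potential terms match exactly: $\psi w^2=\sum_i\psi s_i(N_iw)$. Hence
$$Q(w,w)\le\sum_iQ(s_i,N_iw)=\sum_i\int_M\alpha_i^{1/p}f_iN_iw\dd V_g=\int_Mw^{2-p}\sum_i\alpha_i\phi_i^{p}\frac{f_i}{\phi_i}\dd V_g\le\int_Mh\,w^2\dd V_g,$$
where $h=\max_if_i/\phi_i$. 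Hölder with exponents $n/2$ and $p/2$ gives
$$Q(w,w)\le\Bigl(\int_Mh^{n/2}\dd V_g\Bigr)^{2/n}\Bigl(\int_Mw^p\dd V_g\Bigr)^{(n-2)/n}.$$
Substituting this into the formula for $J(G_\delta(\nu))$ yields \eqref{eq:envelope}. No constant depends on $m$, the weights or the centers.

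\emph{Step 2: the overlap bound.} Put $g_i=(f_i/\phi_i)^{n/2}\ge0$. For any collection of disjoint index pairs, $\max_ig_i\le\sum_ig_i-\sum_{(i,j)}\min(g_i,g_j)$ pointwise. This holds because the maximum over all indices is at most the sum over pairs of $\max(g_i,g_j)$ plus the unpaired $g_l$, and $\max(a,b)=a+b-\min(a,b)$. The first formula in \eqref{eq:kernel-masses} gives $\int_M g_i\dd V_g=ne_n+O(\delta^2)$ uniformly, which after dividing by $n$ produces $me_n+Cm\delta^2$.

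It then remains to show $\int_M\min(g_i,g_j)\dd V_g\ge c\bigl(\delta/(\delta+d)\bigr)^n$, where $d=d_g(P_i,P_j)$. On the region where both cutoffs equal one, $f_i=U_{\delta,P_i}^{\frac{n+2}{n-2}}$ is the uncut bubble power. By \eqref{eq:projected-comparison}, on this region
$$g_i\ge c\,\delta^n(\delta^2+d_g(P_i,x)^2)^{-n}.$$
Take the ball $B(P_i,\delta+d)$. It lies in both cores because $d+\delta\ll r_0$, and every point of it is within $2(\delta+d)$ of both centers. There both $g_i$ and $g_j$ are at least $c\,\delta^n(\delta+d)^{-2n}$. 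Since the ball has volume at least $c(\delta+d)^n$, this gives the claimed lower bound.

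\emph{Main obstacle.} I expect the main care to lie in Step 1, specifically in justifying the Hessian inequality and the integration by parts $Q(s_i,N_iw)=\int_M f_i\,\alpha_i^{1/p}N_iw\dd V_g$ uniformly. Smoothness and strict positivity of the $\phi_i$ make this routine here, with no truncation needed. The same step is also where independence from $m$ and from the weights must be checked. Step 2 only needs uniformity of the constants in Lemma~\ref{lem:green-bubbles}, which that lemma supplies.
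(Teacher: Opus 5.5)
Your proof is correct and follows essentially the same route as the paper. Like the paper, you use the convex, degree-one homogeneous $\ell^{p}$ combination of the $\phi_{\delta,P_i}$ to get $Q(w,w)\le\int_M h\,w^2\dd V_g$ with $h=\max_i f_i/\phi_i$, then the inequality $\max\le\sum-\sum_{\mathrm{pairs}}\min$, then a small-ball lower bound for each pair term. The differences are cosmetic: the paper writes the first step as the pointwise inequality $L_\psi v\le h v$ and bounds the ray maximum via $\frac12bz^2-\frac{n-2}{2n}z^{\frac{2n}{n-2}}\le\frac1n b^{n/2}$ where you use the weak form and H\"older, and your ball radius $\delta+d$ in place of the paper's $(\delta+d)/4$ is immaterial.
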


\begin{proof}
We estimate the entire ray before projecting to $\Neh$, retaining the loss caused by overlapping kernels. The function $F(t)=(\sum_i\alpha_i t_i^{\frac{2n}{n-2}})^{\frac{n-2}{2n}}$ is convex and positively homogeneous of degree one on the positive orthant. Write $v=F(\phi_{\delta,P_1},\ldots,\phi_{\delta,P_m})$. Its Hessian is positive semidefinite, its first derivatives are nonnegative, and $\sum_iF_i\phi_{\delta,P_i}=v$. The chain rule gives
\begin{align*}
L_\psi v
&=\sum_iF_iU_{\delta,P_i}^{\frac{n+2}{n-2}}
-\frac{4(n-1)}{n-2}\sum_{i,j}F_{ij}\ip{\nabla\phi_{\delta,P_i}}{\nabla\phi_{\delta,P_j}}\\
&\le\left(\max_i\frac{U_{\delta,P_i}^{\frac{n+2}{n-2}}}{\phi_{\delta,P_i}}\right)v.
\end{align*}
The term with $\psi$ cancels exactly by homogeneity, so no sign assumption on $\psi$ is involved. Multiply by $v$ and integrate. For $b\ge0$ and $z\ge0$,
$$
\frac12bz^2-\frac{n-2}{2n}z^{\frac{2n}{n-2}}\le\frac1n b^{n/2},
$$
as follows by maximizing in $z$. Apply this inequality pointwise to $z=tv$ and then maximize over $t>0$ in \eqref{eq:ray-energy}. This proves \eqref{eq:envelope}.

For nonnegative numbers $f_1,\ldots,f_m$ and disjoint pairs,
$$
\sum_i f_i-\max_i f_i\ge\sum_{(i,j)}\min(f_i,f_j).
$$
Indeed each paired sum is its maximum plus its minimum; the sum of the pair maxima and unpaired terms dominates the global maximum. To estimate an overlap integral, put $d=d_g(P_i,P_j)$ and integrate on $B(P_i,(\delta+d)/4)$. Both centers' cutoffs are one there, provided $\delta+d$ is sufficiently small. By the upper comparison for $\phi$ in \eqref{eq:projected-comparison}, each density in \eqref{eq:envelope} is bounded below on this ball by $c\delta^n/(\delta+d)^{2n}$. Its volume is at least $c(\delta+d)^n$. Hence the overlap integral is at least $c[\delta/(\delta+d)]^n$. Sum over disjoint pairs and use \eqref{eq:kernel-masses}. This proves \eqref{eq:overlap-energy}, including arbitrarily close centers.
\end{proof}

\begin{proposition}\label{prop:cylinder}
Fix $L_0<\infty$. There are constants $M_0,c_*,B_*>0$ and $K_0$, depending on $n,M,g,\psi,L_0$ and the cutoff, with the following property. Let $K\ge K_0$ be even, and suppose the support of $\nu\in B_K(M)$ is contained in a rectifiable closed curve of length at most $L_0$. With
\begin{equation}\label{eq:cylinder-scale}
\delta_K=M_0K^{-n/(n-2)},
\end{equation}
one has
\begin{equation}\label{eq:cylinder-energy}
J(G_{\delta_K}(\nu))\le Ke_n-c_*K^{-\frac{n+2}{n-2}},
\qquad
\sup_{0<\delta\le\delta_K}J(G_\delta(\nu))\le Ke_n+B_*K^{-\frac{n+2}{n-2}}.
\end{equation}
For fixed $\delta>0$, $G_\delta:\Prob(M)\to\Neh$ is continuous, and
\begin{equation}\label{eq:mass-approximation}
\sup_{\nu\in B_K(M)}W_1(\mu_{G_\delta(\nu)},\nu)\le C\delta
\end{equation}
with $C$ independent of $K$.
\end{proposition}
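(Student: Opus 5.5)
The plan is to read off all three assertions from Lemma~\ref{lem:envelope} and Lemma~\ref{lem:green-bubbles}. The exponent $K^{-\frac{n+2}{n-2}}$ arises because $K\delta_K^2=M_0^2K^{1-\frac{2n}{n-2}}=M_0^2K^{-\frac{n+2}{n-2}}$. Throughout write $p=\frac{2n}{n-2}$ and let $\nu=\sum_{i=1}^m\alpha_i\delta_{P_i}$ have $m\le K$ distinct support points.

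\emph{Upper bound for all $0<\delta\le\delta_K$.} I bound the maximum in \eqref{eq:envelope} by the sum of the $m$ densities and apply the first line of \eqref{eq:kernel-masses}. This gives
$$J(G_\delta(\nu))\le me_n+Cm\delta^2\le Ke_n+CK\delta_K^2=Ke_n+CM_0^2K^{-\frac{n+2}{n-2}}.$$
Hence $B_*=CM_0^2$, and $M_0$ will be fixed in the next step. No assumption on the support is used here, and the constants are independent of $m$, as Lemma~\ref{lem:envelope} states.

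\emph{Strict lower gap at $\delta=\delta_K$.} If $m<K$, the same bound gives at most $(K-1)e_n+CK\delta_K^2$, which is below $Ke_n-c_*K^{-\frac{n+2}{n-2}}$ once $K\ge K_0$. Now let $m=K$ with $K$ even. Parametrize the closed curve of length at most $L_0$ by arc length and list the $K$ points in the order they first occur along it. Consecutive arc-length gaps $d_1,\dots,d_{K}$ then sum to at most $L_0$, and $d_g\le$ arc length. Form the disjoint pairs $(P_1,P_2),(P_3,P_4),\dots$; these are $K/2$ pairs whose gaps sum to at most $L_0$. By Markov's inequality at least $K/4$ of these pairs have gap $\le 4L_0/K$, and for $K\ge K_0$ that is small compared with $r_0$, as required in \eqref{eq:overlap-energy}. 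Each such pair contributes at least
$$c\Bigl(\frac{\delta_K}{\delta_K+4L_0/K}\Bigr)^n\ge c'\,(M_0K^{-\frac{2}{n-2}}/L_0)^n,$$
using $\delta_K\ll 1/K$. Summing over at least $K/4$ pairs, \eqref{eq:overlap-energy} yields
$$J(G_{\delta_K}(\nu))\le Ke_n+\bigl(CM_0^2-c''M_0^nL_0^{-n}\bigr)K^{-\frac{n+2}{n-2}}.$$
Because $n>2$, I choose $M_0$ so large that the bracket is at most $-c_*<0$, and then choose $K_0$.

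\emph{Continuity and \eqref{eq:mass-approximation}.} For fixed $\delta$, the function $(P,x)\mapsto\phi_{\delta,P}(x)$ is smooth and bounded below by a positive constant. Hence $\nu\mapsto\int\phi_{\delta,P}^{p}\,d\nu(P)$ is continuous from $W_1$ into $C^1(M)$ with a positive lower bound. Its $1/p$ power is therefore continuous into $C^1\subset H^1$, and $\Pi$ is continuous.

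For the mass estimate, note that $\mu_{G_\delta(\nu)}$ has density proportional to $\int\phi_{\delta,P}^{p}\,d\nu(P)$, because $\Pi$ only rescales the function. I compare it with the intermediate measure in which each kernel $\phi_{\delta,P}^{p}$ is normalized to mass $\alpha_i$. Transporting each normalized kernel to its center $P$ costs at most $C\delta$ by the last line of \eqref{eq:kernel-masses}. The two measures differ only through the kernel masses, which equal $ne_n+O(\delta^2)$, so the reweighting costs at most $\operatorname{diam}M\cdot O(\delta^2)$. Neither step depends on $K$.

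The step I expect to be delicate is the lower gap: Lemma~\ref{lem:envelope} is stated only for distinct centers, and the pairing must use only pairs that satisfy its smallness condition. The ordering along the curve together with the Markov selection is designed to handle both issues. Coincident centers need no extra argument, since they reduce to the case $m<K$, which is already treated.
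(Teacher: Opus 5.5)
Your proof is correct and follows the paper's argument essentially step by step. You use the same arc-length ordering and pairing along the curve, with the counting argument giving at least $K/4$ close pairs. You use the same balance of $CM_0^2$ against $cM_0^n$ at the scale $\delta_K$, discard the overlaps for the uniform upper bound and for supports with fewer than $K$ points, and use the same coupling-plus-reweighting estimate for \eqref{eq:mass-approximation}. The differences are only cosmetic: you bound $\delta_K+4L_0/K$ using $K\delta_K\to0$ where the paper assumes $K\delta\le1$, and you obtain continuity of $G_\delta$ from the Lipschitz dependence of the kernel on $P$ rather than from a finite-net argument.
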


\begin{proof}
First assume that the actual support has $K$ points. Choose one preimage of each point on an arclength parametrization of the containing closed curve, order those parameters cyclically, and pair successive parameters. Their selected $K/2$ gaps have total length at most $L_0$. At least $K/4$ of the pairs consequently have geodesic distance at most $4L_0/K$. The choice of curve and pairs is only used to prove a pointwise inequality for the given configuration.

For large $K$ and $K\delta\le1$, these pairs satisfy the core condition of Lemma~\ref{lem:envelope}, and
$$
J(G_\delta(\nu))\le Ke_n+CK\delta^2-cK^{n+1}\delta^n.
$$
At the scale \eqref{eq:cylinder-scale} the two corrections are
$$
K\delta_K^2=M_0^2K^{-\frac{n+2}{n-2}},\qquad
K^{n+1}\delta_K^n=M_0^nK^{-\frac{n+2}{n-2}},\qquad
K\delta_K=M_0K^{-2/(n-2)}\to0.
$$
Choose $M_0$ first so that $cM_0^n-CM_0^2>0$, and then choose $K_0$ so large that every pair used above lies in the cutoff cores and $K\delta_K\le1$. These choices make the terminal correction strictly negative. Discarding the negative overlap term gives the asserted upper bound throughout $0<\delta\le\delta_K$. The constants in this argument are independent of $K$ and of all weights. If the actual support has $m\le K-1$ points, the bound without overlaps is
$$
J(G_\delta(\nu))\le(K-1)e_n+CK\delta_K^2<Ke_n-e_n/2
$$
for large $K$. Increasing $K_0$ and decreasing $c_*$ if necessary proves \eqref{eq:cylinder-energy} for every support cardinality. The estimates do not require a positive lower bound on the weights; thus they include every zero-weight face and every collision.

For $\delta>0$, the kernel $(P,x)\mapsto\phi_{\delta,P}(x)^{\frac{2n}{n-2}}$ is smooth and strictly positive. Weak convergence of measures gives uniform convergence of its integrals and of each $x$ derivative, by compactness and a finite-net argument. Raising to the power $(n-2)/(2n)$ and applying \eqref{eq:projection} proves continuity of $G_\delta$, jointly also in $\delta>0$.

For an atomic measure, the critical density before normalization is exactly $\sum_i\alpha_i\phi_{\delta,P_i}^{\frac{2n}{n-2}}$. By \eqref{eq:kernel-masses} its normalized mixing weights differ from $\alpha_i$ by total variation at most $C\delta^2$, uniformly in the number of atoms. Each normalized kernel is within $C\delta$ of its center in $W_1$. Couple each kernel to that center and then correct the mixing weights. The total cost is at most $C\delta+\diam(M)C\delta^2$, proving \eqref{eq:mass-approximation}. The Nehari scalar does not change the normalized critical measure.
\end{proof}

\section{The relative energy class and the existence proof}\label{sec:detector}

The intersection in \eqref{eq:source-pairing} is initially a statement about $B_K(M)$. We now construct a cohomology class in a relative pair of full Nehari sublevels which detects the same finite cycle. The exact local mass estimate supplies the support condition needed for this transfer.

\begin{proposition}\label{prop:target-class}
Let $n\ge6$ and suppose that $\psi-R_g>0$ on a nonempty open set. Fix a dyadic $K\ge2$ and choose $C_K$ there as in Proposition~\ref{prop:tree}. Assume that \eqref{eq:main} has no positive solution. For every sufficiently small $\varepsilon>0$ and then all sufficiently small $\delta>0$,
\begin{equation}\label{eq:target-nonzero}
[(G_\delta)_*z_K]\ne0
\quad\hbox{in}\quad
H_{2K+n-2}(\Neh^{Ke_n+\varepsilon},\Neh^{Ke_n-\varepsilon};\F).
\end{equation}
\end{proposition}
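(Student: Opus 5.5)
The plan is to detect the class $[(G_\delta)_*z_K]$ by a cohomology class on the pair $(X,A)=(\Neh^{Ke_n+\varepsilon},\Neh^{Ke_n-\varepsilon})$ that pulls back under $G_\delta$ to $\overline{\tau_C}$; then \eqref{eq:source-pairing} gives a pairing equal to one. The cycle is admissible because the second bound in \eqref{eq:cylinder-energy} places $G_\delta(Z_K)$ inside $X$ for small $\delta$ with $B_*K^{-(n+2)/(n-2)}$ replaced by anything below $\varepsilon$. Strictly, that proposition is stated for large $K$. For a fixed dyadic $K$ I would instead use Lemma~\ref{lem:envelope} and Lemma~\ref{lem:local-expansion} directly: every $\nu\in B_K(M)$ has $J(G_\delta(\nu))\le Ke_n+CK\delta^2<Ke_n+\varepsilon$ once $\delta$ is small. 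The boundary of $z_K$ is zero, so $(G_\delta)_*z_K$ is a relative cycle.

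First I would choose $\rho_0$ from Proposition~\ref{prop:balanced}, applied to the compact separated family underlying $C_K$ inside the set where $\psi-R_g\ge h_0>0$. Next I would choose $\rho_1<\rho_2$ small relative to $\rho_0$ and to the separation of $C_K$, and take $\varepsilon<\varepsilon_0$ from Proposition~\ref{prop:localization}. That proposition deforms $(X,A)$ into $(Z,A)$, and it keeps the deformation of the source inside $Z$ when the source lies in the $\rho_1$-tube. By \eqref{eq:mass-approximation}, $W_1(\mu_{G_\delta(\nu)},\nu)\le C\delta<\rho_1$, so all of $G_\delta(Z_K)$ lies in the inner tube. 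It therefore suffices to build a class $\omega\in H^{2K+n-2}(Z,A)$ whose pullback is $\overline{\tau_C}$. The class on $(X,A)$ is then obtained by composing with the flow map $\Phi_T:(X,A)\to(Z,A)$. Since the flow is homotopic to the identity on the image of the cycle, the pairing of the pulled-back class with $(G_\delta)_*z_K$ equals the pairing of $\omega$ with it.

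The central step, and the main obstacle, is the local center and weight selector. On the part of $Z$ where $\mu_u$ is $\rho_2$-close to the $K$-point stratum near $C_K$, I would define a continuous map $\sigma(u)\in B_K(M)$. Its centers come from local $W_1$-barycenters of $\mu_u$ in disjoint balls, using coordinates in a chart, and its weights are the local masses $\int\theta_i(P,x)\,d\mu_u$. Here the partition $\theta_i$ is attached to the selected centers. I would then consider the set $\sigma^{-1}(C_K)$ of functions whose selected configuration lies in $C_K$ with exactly equal weights. By Proposition~\ref{prop:balanced}, this set satisfies $J>Ke_n$. Note the strict sublevel $X=\Neh^{Ke_n+\varepsilon}$ still contains points with $J>Ke_n$, which is fine. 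The essential consequence is that $\sigma^{-1}(C_K)$ is disjoint from $A$. The difficulty is making the selector continuous with values in a tubular neighborhood of $C_K$, and checking that equal weights in the sense of $\sigma$ coincide with the exact conditions \eqref{eq:balanced-conditions}. I would build $\theta_i$ from the tube projection onto $C_K$, which is smooth since $C_K$ is embedded in the open stratum, to avoid circularity.

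With this selector, I would pull back the Thom class along $\sigma:(Z\cap N, Z\cap N\setminus\sigma^{-1}(C_K))\to(B_K,B_K\setminus C_K)$, where $N$ is the tube region. Excision then gives a class in $H^*(Z,Z\setminus\sigma^{-1}(C_K))$; the complement of the tube region lies in $Z\setminus\sigma^{-1}(C_K)$. Since $A\subset Z\setminus\sigma^{-1}(C_K)$, restriction gives $\omega\in H^*(Z,A)$. Finally, $\sigma\circ G_\delta$ is homotopic to the inclusion $Z_K\to B_K(M)$ by a homotopy that does not create intersections with $C_K$ from the complement. I would check this using \eqref{eq:mass-approximation} together with the straight-line homotopy of centers and weights in the tube, on which the Thom class is natural. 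Hence $\langle\omega,(G_\delta)_*z_K\rangle=\langle\overline{\tau_C},[z_K]\rangle=1$, and \eqref{eq:target-nonzero} follows.
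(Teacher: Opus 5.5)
Your architecture is the paper's: admissibility from Lemma~\ref{lem:envelope}, the localization flow of Proposition~\ref{prop:localization} with the source in the inner tube, a center and exact-weight selector whose preimage of $C_K$ avoids $A$ by Proposition~\ref{prop:balanced}, a pulled-back Thom class, and naturality of the pairing.

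\textbf{The gap is at the excision step.} To pass from a class on $(Z\cap N,(Z\cap N)\setminus\sigma^{-1}(C_K))$ to one on $(Z,Z\setminus\sigma^{-1}(C_K))$, you need $\sigma^{-1}(C_K)$ to be closed in $Z$, with closure inside the interior of the tube region $N$. The fact you cite, that $Z\setminus N\subset Z\setminus\sigma^{-1}(C_K)$, is automatic and does not give this. Nothing in your choices prevents measures near the frontier of $N$ from having their selected configuration in $C_K$. If that happens, the excision isomorphism is unavailable.

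The missing idea is the \emph{fixing property}: the selector must be the identity on every measure of $B_K(M)$ near $C_K$, not only on $C_K$. The paper uses it as follows:
\begin{enumerate}
\item Build the selector and choose $R$ so that it is defined on the $4R$-neighborhood and displaces measures by less than $\rho_0$.
\item Observe that the compact set $\{\nu:R\le\dist_{W_1}(\nu,C_K)\le2R,\ R_{\rm loc}(\nu)\in C_K\}$ is disjoint from $B_K(M)$.
\item Only then choose $\rho_2$ below the distance of that set to $B_K(M)$.
\end{enumerate}
This makes $S_C$ closed in $\Um$ and contained in the $R$-neighborhood of $C_K$. So $\rho_2$ must be fixed after the selector and the tube radius, with $\rho_1$ and $\varepsilon$ after that. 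In your ordering $\rho_2$ comes before the selector and is only ``small relative to $\rho_0$ and the separation''.

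The same fixing property is what your final homotopy needs. The map $\sigma\circ G_\delta$ is close to the inclusion only because $\sigma(\nu)=\nu$ on $B_K(M)$ near $C_K$ and $W_1(\mu_{G_\delta(\nu)},\nu)\le C\delta$. Moreover, $\sigma\circ G_\delta$ is defined only on the part of $Z_K$ sent into $N$, so your relative homotopy needs a localization you have not supplied. The paper avoids this by building $\alpha$ on the whole measure neighborhood $\Um$ and pairing its absolute image along the convex homotopy $(1-t)\nu+t\mu_{G_\delta(\nu)}$, which requires no control of pairs.

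Finally, the selector itself is left as ``the main obstacle''. Local barycenters in balls need smooth cutoffs for weak continuity. They also need balls that move with the configuration, or a gluing of local choices modulo permutations, since $C_K$ is a positive-dimensional family. The paper solves
$$\int_M\chi_i(P,x)\exp_{P_i}^{-1}(x)\,d\nu(x)=0$$
with moving smooth cutoffs, using the implicit function theorem; the derivative at a $K$-atom measure is block diagonal with blocks $-w_i\operatorname{Id}$. It then glues the unique nearby roots modulo permutation, and that uniqueness at $K$-atom measures is exactly what yields the fixing property.
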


\begin{proof}
Fix $K$ throughout the construction. Choose a compact neighborhood of the center configurations of $C_K$, still separated and contained in $\{\psi-R_g>0\}$. On this family choose disjoint radial cutoffs $\chi_i(P,x)$, equal to one in smaller center balls, and set
\begin{equation}\label{eq:partition-selector}
\theta_i(P,x)=\chi_i(P,x)+\frac{1-\sum_j\chi_j(P,x)}K.
\end{equation}
These are a nonnegative partition of one with the core property of Proposition~\ref{prop:balanced}. Their dependence is permutation equivariant. Let $\rho_0$ be the neighborhood size furnished by that proposition.

There is a continuous local center and weight selector on a neighborhood of $C_K$ in $\Prob(M)$. To construct it in an ordered local chart, solve
\begin{equation}\label{eq:selector-centers}
\int_M\chi_i(P,x)\exp_{P_i}^{-1}(x)\dd\nu(x)=0,
\qquad i=1,\ldots,K.
\end{equation}
Use smooth local frames and extend each integrand by zero off its cutoff support. At a $K$-atom measure $\sum_i w_i\delta_{Q_i}$ in the indicated separated neighborhood, the derivative in $P$ at $P_i=Q_i$ is block diagonal with blocks $-w_i\operatorname{Id}$. Terms from differentiating the cutoff or frame vanish at the atom. The integral kernels and their first center derivatives are uniformly continuous, so integration against $\nu$ is a continuous parameter for the finite-dimensional implicit function theorem. Near the compact $C_K$, the weights are bounded away from zero. Uniform local inverses and unique close matching glue the roots, up to the required permutation, into a continuous selector.

Write these centers as $P_i(\nu)$, define the exact weights
$$
w_i(\nu)=\int_M\theta_i(P(\nu),x)\dd\nu(x),
\qquad
R_{\rm loc}(\nu)=\sum_iw_i(\nu)\delta_{P_i(\nu)}.
$$
After shrinking the neighborhood of $C_K$, these weights are positive and sum to one. The selector fixes every measure in $B_K(M)$ sufficiently near $C_K$, not just the equal-weight ones. Indeed such a measure must have positive mass in each of the $K$ separated core neighborhoods. With at most $K$ atoms, there is exactly one in each neighborhood and none elsewhere. Equations~\eqref{eq:selector-centers} recover its centers, and \eqref{eq:partition-selector} recovers its weights.

Choose $R>0$ so that $R_{\rm loc}$ is defined on the closed $4R$-neighborhood of $C_K$, its selected configurations stay in the chosen compact family, and
$$
W_1(\nu,R_{\rm loc}(\nu))<\rho_0
$$
there. This follows by continuity and compactness, since $R_{\rm loc}$ fixes $C_K$. The compact set
$$
\left\{\nu:R\le\dist_{W_1}(\nu,C_K)\le2R,
\ R_{\rm loc}(\nu)\in C_K\right\}
$$
is disjoint from $B_K(M)$ by the fixing property. If nonempty, its distance to the compact $B_K(M)$ is positive. Choose $\rho_2>0$ smaller than that distance, and put
$$
\Um=\{\nu:\dist_{W_1}(\nu,B_K(M))<\rho_2\}.
$$
The entire annular inverse image is now excluded from $\Um$. Therefore
\begin{equation}\label{eq:closed-support}
S_C=\{\nu\in\Um:\dist_{W_1}(\nu,C_K)<2R,
\ R_{\rm loc}(\nu)\in C_K\}
\end{equation}
is closed in $\Um$ and is contained in the $R$-neighborhood of $C_K$. To check closedness, a limit in $\Um$ retains the selector condition. A possible limit in the annulus from $R$ to $2R$ was excluded by the choice of $\rho_2$. Also $S_C\cap B_K(M)=C_K$.

If $\mu_u\in S_C$, its selected centers give an equal-weight configuration in $C_K$. Thus the exact weights in \eqref{eq:partition-selector} are $1/K$, and Proposition~\ref{prop:balanced} gives
\begin{equation}\label{eq:support-avoidance}
\mu_u\in S_C\quad\Longrightarrow\quad J(u)>Ke_n.
\end{equation}
The full condition $R_{\rm loc}(\nu)\in C_K$ in \eqref{eq:closed-support} is used here. Equality of the weights alone would not constrain the selected centers to $C_K$.

On the subset $\{\nu\in\Um:\dist_{W_1}(\nu,C_K)<2R\}$, pull back $\tau_C$ by $R_{\rm loc}$. The resulting class is relative to the complement of $S_C$ in this subset. Since $S_C$ is contained in the smaller $R$-neighborhood of $C_K$, excision extends it uniquely to a class
$$
\alpha\in H^{2K+n-2}(\Um,\Um\setminus S_C).
$$
Its restriction to $(B_K(M),B_K(M)\setminus C_K)$ is $\tau_C$: near $C_K$ the selector is the identity on $B_K(M)$, and both classes are supported on $C_K$ after restriction to $B_K(M)$.

Let $X=\Neh^{Ke_n+\varepsilon}$ and $A=\Neh^{Ke_n-\varepsilon}$, and set
$$
\mathscr O=X\cap\mu^{-1}(\Um),\qquad Z=A\cup\mathscr O.
$$
By \eqref{eq:support-avoidance}, the map $\mu$ sends $(\mathscr O,A\cap\mathscr O)$ to $(\Um,\Um\setminus S_C)$. Pulling back $\alpha$ and applying open-cover excision gives
$$
\beta_Z\in H^{2K+n-2}(Z,A).
$$
Here both $A$ and $\mathscr O$ are open in $Z$. Concretely, represent $\alpha$ by a relative cocycle which is zero on singular chains in $\Um\setminus S_C$. Its pullback is zero on $A\cap\mathscr O$, and therefore agrees with the zero cocycle on $A$. It defines a cocycle on the singular chain subcomplex subordinate to the cover $\{A,\mathscr O\}$. Barycentric subdivision gives its class on the full relative chain complex. Thus the extension has no unaccounted boundary along the overlap.

Choose $0<\rho_1<\rho_2$. Lemma~\ref{lem:envelope} gives $J(G_\delta(\nu))\le Ke_n+CK\delta^2$ on all of $B_K(M)$. By \eqref{eq:mass-approximation}, for small $\delta$ the whole image of the map \eqref{eq:test-map} on $B_K(M)$ lies in $\mathscr O$, with its measures in the inner tube. The probability-measure homotopy
$$
H_t(\nu)=(1-t)\nu+t\mu_{G_\delta(\nu)}
$$
stays in $\Um$, since $W_1(H_t(\nu),\nu)\le tW_1(\mu_{G_\delta(\nu)},\nu)<\rho_1$. Use the absolute image of $\alpha$ to compute the pairing; the homotopy need not be a map of relative pairs. Equation~\eqref{eq:source-pairing} gives
\begin{equation}\label{eq:localized-pairing}
\ip{\beta_Z}{(G_\delta)_*[z_K]}
=\ip{\overline\alpha}{(\mu\circ G_\delta)_*[z_K]}
=\ip{\overline{\tau_C}}{[z_K]}=1.
\end{equation}
The source chain is absolutely closed, so this calculation also computes its relative pairing.

Under the no-solution assumption, Proposition~\ref{prop:localization}, for sufficiently small $\varepsilon$, gives a time $T<\infty$ such that $\Phi_T:(X,A)\to(Z,A)$. Since the source starts in the inner measure tube, its whole flow homotopy lies in $Z$. Therefore $\Phi_T\circ G_\delta$ and $G_\delta$ give the same relative source class in $(Z,A)$. By naturality of the cohomology--homology pairing,
$$
\ip{\Phi_T^*\beta_Z}{(G_\delta)_*[z_K]}
=\ip{\beta_Z}{(\Phi_T\circ G_\delta)_*[z_K]}
=1.
$$
The final equality uses the flow homotopy in $Z$ and \eqref{eq:localized-pairing}. Thus the cycle is nonzero in the pair $(X,A)$, as asserted in \eqref{eq:target-nonzero}.
\end{proof}

\begin{proof}[Proof of Theorem~\ref{thm:main}]
Necessity was proved in the Introduction. Assume $\lambda_1(L_\psi)>0$. For $3\le n\le5$ apply \cite[Theorem 2, p.~41]{BahriBrezis1996} to the coercive operator $-\Delta_g+\frac{(n-2)\psi}{4(n-1)}$. If $v>0$ is the resulting solution, then $u=\left(\frac{4(n-1)}{n-2}\right)^{(n-2)/4}v$ satisfies \eqref{eq:main}, and Lemma~\ref{lem:regularity} gives the asserted smoothness.

Let now $n\ge6$. If $(\psi-R_g)(P)<0$ at some point, \eqref{eq:local-energy} gives $J(\Pi(U_{\delta,P}))<e_n$ for small $\delta$. If there were no positive solution, Lemmas~\ref{lem:quantization} and \ref{lem:flow} would bound $\|V\|_Q$ below on the energy cap below this value: every contrary sequence would have a positive Palais--Smale level below $e_n$. Equation~\eqref{eq:flow-energy} would then force energy to decrease at a fixed positive rate for all forward time, contradicting its positive lower bound. Hence a solution exists in this case.

If $\psi=R_g$ identically, the Yamabe theorem \cite{LeeParker1987} supplies a positive smooth $w$ with $L_\psi w=\lambda w^{\frac{n+2}{n-2}}$. Coercivity gives $\lambda=Q(w,w)/\int_Mw^{\frac{2n}{n-2}}\dd V_g>0$. Then $u=\lambda^{(n-2)/4}w$ solves \eqref{eq:main}.

It remains to treat $\psi-R_g\ge0$ and not identically zero. Suppose for contradiction that no positive solution exists. Fix $f$ and $L_0$ from Proposition~\ref{prop:source}. Choose $M_0$ and then a sufficiently large dyadic $K$ as in Proposition~\ref{prop:cylinder}, with
$$
Ke_n+B_*K^{-\frac{n+2}{n-2}}<b<(K+1)e_n.
$$
For this fixed $K$, choose $C_K$ in the nonempty open set $\{\psi-R_g>0\}$. Next fix its compact separated center family, the local mass estimate, the selector and the two measure tubes in Proposition~\ref{prop:target-class}. The latter proposition and the localization time can depend on all these choices. Only afterwards take
$$
0<\varepsilon<\min\{\varepsilon_0,c_*K^{-\frac{n+2}{n-2}}/2,(b-Ke_n)/4\},
$$
where $\varepsilon_0$ is small enough for the localization and target class. Finally choose $0<\delta_0<\delta_K$ so small that the source has measures in the inner tube and energy below $Ke_n+\varepsilon/4$.

Set $A=\Neh^{Ke_n-\varepsilon}$, $X=\Neh^{Ke_n+\varepsilon}$ and $Y=\Neh^b$. Proposition~\ref{prop:target-class} makes $(G_{\delta_0})_*z_K$ nonzero in $H_{2K+n-2}(X,A)$. On the other hand, Proposition~\ref{prop:cylinder} gives the homotopy $G_\delta$ on $Z_K$, for $\delta_0\le\delta\le\delta_K$, entirely in $Y$, with terminal image in $A$, since $J(G_{\delta_K}(\nu))\le Ke_n-c_*K^{-\frac{n+2}{n-2}}<Ke_n-\varepsilon$ for every $\nu\in Z_K$. Applied to the finite closed chain $z_K$, its singular prism $P$ satisfies
$$
\partial P=(G_{\delta_0})_*z_K+(G_{\delta_K})_*z_K.
$$
Thus the same source class is zero in $H_{2K+n-2}(Y,A)$.

The closed interval $[Ke_n+\varepsilon/2,b]$ contains no integer bubble level. Lemma~\ref{lem:gap} therefore gives one finite time sending $Y$ into $\Neh^{Ke_n+\varepsilon/2}\subset X$ and preserving $A$. This deformation crosses no concentration level. Apply this single continuous map to every simplex of $P$. Its boundary is the flowed source plus a chain in $A$. The original source has energy below $Ke_n+\varepsilon/4$, so its full homotopy to the flowed source stays in $X$. Adding that source prism gives a relative filling of the original $(G_{\delta_0})_*z_K$ in $(X,A)$. This contradicts \eqref{eq:target-nonzero}.

The no-solution assumption has led to a contradiction in every remaining case. Hence there is a nonzero nonnegative weak solution, and Lemma~\ref{lem:regularity} makes it smooth and strictly positive.
\end{proof}

\vspace{0.5cm}

%\textbf{Funding} No funding.

\textbf{Data Availability Statement}
Data sharing is not applicable to this article because no datasets were generated or analyzed during the current study.

%\textbf{Compliance with ethical standards}

\textbf{Conflict of interest} The authors declare that they have no conflict of interest.

\textbf{Code availability}
Not applicable.

\textbf{AI Assistance Statement}
 AI is used as an assistive tool.

\end{document}